\newcommand{\CAT}{\mathrm{CAT}}
\newcommand{\GL}{\operatorname{GL}}
\newcommand{\p}{\operatorname{\mathbb{P}}}
\renewcommand{\P}{\operatorname{\mathbb{P}}}
\newcommand{\Z}{\operatorname{\mathbb{Z}}}
\newcommand{\h}{\operatorname{\mathbb{H}^{\infty}}}
\newcommand{\N}{\operatorname{\mathbb{Z}_{\geq 0}}}
\newcommand{\R}{\operatorname{\mathbb{R}}}
\newcommand{\F}{\operatorname{\mathbb{F}}}
\newcommand{\kk}{k}
\newcommand{\bp}{\mathfrak{b}}
\newcommand{\id}{\operatorname{id}}
\newcommand{\Aut}{\operatorname{Aut}}
\newcommand{\Bir}{\operatorname{Bir}}
\newcommand{\Cr}{\operatorname{Cr}}
\newcommand{\PGL}{\operatorname{PGL}}
\newcommand{\Jon}{\mathscr{J}}
\newcommand{\B}{\mathcal{B}}
\def\dashmapsto{\mapstochar\dashrightarrow}
\def\dashmapsto{\mapstochar\dashrightarrow}
\newcommand{\CC}{\mathcal{C}}
\theoremstyle{plain}
\newtheorem{theorem}{Theorem}[section]
\newtheorem{thm}[theorem]{Theorem}
\newtheorem{lemma}[theorem]{Lemma}
\newtheorem{proposition}[theorem]{Proposition}
\newtheorem{corollary}[theorem]{Corollary}
\theoremstyle{definition}
\newtheorem{defin}[theorem]{Definition}
\newtheorem{remark}[theorem]{Remark}
\newtheorem{rem}[theorem]{Remark}
\newtheorem{question}[theorem]{Question}
\newtheorem{conv}[theorem]{Convention}
\title[Finitely generated subgroups of algebraic elements]{Finitely generated subgroups of algebraic elements of plane Cremona groups are bounded} 
\author[A.~Lonjou]{Anne Lonjou$^{\ast}$}
\subjclass[2010]{14E07; 20F65; 37F10} 
	\address{
	Department of Mathematics, University of the
	Basque Country UPV/EHU,  Sarriena s/n, 48940 Leioa, Bizkaia, Spain. 
	IKERBASQUE, Basque Foundation for Science, Bilbao, Spain.}
	\email{anne.lonjou@ehu.eus}
\author[P.~Przytycki]{Piotr Przytycki$^{\dag}$}
	\address{
		Department of Mathematics and Statistics,
		McGill University,
		Burnside Hall,
		805 Sherbrooke Street West,
		Montreal, QC,
		H3A 0B9, Canada}
	\email{piotr.przytycki@mcgill.ca}
		\address{Department of Mathematics, 
			ETH Z\"urich,
			Rämistrasse 101, 
			8092 Z\"urich, Switzerland}
	\email{christian.urech@gmail.com}
	\thanks{$\ast$ Partially supported by MCIN /AEI /10.13039/501100011033 / FEDER through the Spanish Governement grant PID2022-138719NA-I00, by the french grant ANR-22-CE40-0004 GOFR and by the Basque Government grant IT1483-22.}
		\thanks{$\dag$ Partially supported by NSERC and (Polish) Narodowe Centrum Nauki, UMO-2018/30/M/ST1/00668.}
\author[C.~Urech]{Christian Urech}
\begin{document}
	\maketitle
	
\begin{abstract}
		\noindent
		We prove that any finitely generated subgroup of the plane Cremona group consisting only of algebraic elements is of bounded degree. This follows from a more general result on `decent' actions on infinite restricted products. We apply our results to describe the degree growth of finitely generated subgroups of the plane Cremona group. 
	\end{abstract}
			
\section{Introduction} 

The Cremona group $\Cr_2(k)$ over a field $k$ is the group of birational transformations of the projective plane $\p^2$ over $k$. Cremona groups have been the delight of algebraic geometers and group theorists in both, classical and modern times. As of today, many aspects of $\Cr_2(k)$ are well understood and there are many tools at hand to study those groups. For instance, $\Cr_2(k)$ acts by isometries on an infinite dimensional hyperbolic space $\mathbb{H}^\infty$ \cite{Cantat_groupes_birat} and on various CAT(0) cube complexes \cite{lonjou2021actions}. Nevertheless, some questions have remained open. The goal of this article is to positively answer a question asked more than a decade ago by Favre in \cite[Question~1]{favrebourbaki}.

Let us fix projective coordinates $[x:y:z]$ of $\p^2$. An element $f\in\Cr_2(k)$ is given by
\[
[x:y:z]\dashmapsto [f_0:f_1:f_2],
\] 
where the $f_i\in k[x,y,z]$ are homogeneous of the same degree and without non-constant common factor. The \emph{degree} $\deg(f)$ of $f$ is defined as the degree of the polynomials $f_i$. An element $f\in\Cr_2(k)$ is \emph{algebraic}, if $\deg(f^n)$ is uniformly bounded for all $n\in\Z$. A subgroup $G<\Cr_2(k)$ is \emph{bounded} if the degree of all elements in $G$ is uniformly bounded. 
Clearly, a bounded subgroup consists of algebraic elements. However, the converse is not true. For instance, consider the subgroup defined in the affine coordinates $(x,y)$  of $\p^2$ by 
\[
G=\{(x,y)\dashmapsto (x+R(y), y)\mid R\in k(y)\}.
\]
 Every element in $G$ is algebraic, but $G$ is not bounded. In this paper, we show the following theorem, which solves Favre's question: 

\begin{theorem}\label{thm:main}
	{Let $k$ be a field and} let $G<\Cr_2(k)$ be a finitely generated subgroup such that every element of $G$ is algebraic. Then $G$ is bounded.
\end{theorem}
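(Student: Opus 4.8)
The plan is to exploit the isometric action of $\Cr_2(k)$ on the infinite-dimensional hyperbolic space $\mathbb{H}^\infty$ constructed by Cantat. Recall the standard dictionary (due to Cantat--Lamy and Blanc--Cantat) between the dynamical type of an element $f \in \Cr_2(k)$ and the type of the isometry it induces on $\mathbb{H}^\infty$: an element is algebraic (equivalently, of bounded degree under iteration) precisely when it acts on $\mathbb{H}^\infty$ as an \emph{elliptic} or \emph{parabolic} isometry, whereas elements of exponential or polynomial-of-positive-degree growth act as \emph{loxodromic} isometries. In fact the translation length of $f$ on $\mathbb{H}^\infty$ equals $\log \lambda(f)$, where $\lambda(f) = \lim_n \deg(f^n)^{1/n}$ is the dynamical degree; so $f$ being algebraic forces translation length zero, i.e.\ $f$ is non-loxodromic. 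Thus the hypothesis that \emph{every} element of $G$ is algebraic says exactly that $G$ contains no loxodromic isometry.

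\medskip

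\noindent\emph{Step 1: From ``no loxodromics'' to a global fixed point or parabolic fixed point.} A finitely generated group acting by isometries on a $\mathrm{CAT}(-1)$ space (or more generally a Gromov-hyperbolic space) with no loxodromic element is, by the classification of such actions, either \emph{bounded} (it has a bounded orbit, hence fixes a point in the boundary or the space) or it fixes a unique point at infinity with all elements parabolic. The first case is essentially what we want; the subtlety is passing from a bounded orbit in $\mathbb{H}^\infty$ to a genuine bound on algebraic degrees. I would invoke the general structural result the authors refer to in the abstract --- a ``decent'' action on an infinite restricted product --- to upgrade a bounded orbit to boundedness of the degree. The second (parabolic) case is the genuine obstacle and must be excluded or handled separately.

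\medskip

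\noindent\emph{Step 2: Ruling out the parabolic case.} The hard part will be showing that a finitely generated subgroup of $\Cr_2(k)$ cannot consist entirely of parabolic isometries fixing a common boundary point without being bounded. Here I would use the finer algebro-geometric structure: a common fixed point at infinity corresponds (after regularizing) to an invariant fibration or pencil, so $G$ preserves a pencil of curves and its action is governed by a de Jonqui\`eres-type or automorphism structure on a rational surface. The subtlety is that degree growth along a parabolic direction is at most linear (quadratic), but can still be unbounded, as the explicit example $G = \{(x,y) \dashmapsto (x+R(y),y)\}$ in the introduction shows --- so mere parabolicity does not yield the conclusion. The key leverage must be \emph{finite generation combined with algebraicity of every element}: finitely many generators generate only finitely many ``preserved structures,'' and I expect one argues that a finitely generated group of parabolic isometries with all translation lengths zero (which is automatic) that is additionally \emph{uniformly} parabolic on each generator is in fact virtually of bounded degree, contradicting unboundedness unless $G$ was bounded all along. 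This is where the ``decent action on an infinite restricted product'' does the real work: the restricted product decomposition encodes the degrees localized at the base points of the generators, and decency lets one control the interaction of finitely many generators across the factors.

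\medskip

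\noindent\emph{Step 3: Assembling the bound.} Granting the structural theorem, I would argue that finite generation gives a uniform bound $D$ on the degrees of the generators and their base-point data; decency of the action propagates this to a uniform bound on $\deg(g)$ for all $g \in G$ via the restricted-product coordinates, because each word in the generators can only ``charge'' finitely many factors and each factor is individually bounded by the algebraicity hypothesis. The conclusion $G$ is bounded then follows. In summary, the proof reduces Favre's question to a combinatorial/metric statement about isometric actions, with the main difficulty being the parabolic/unbounded case, which is precisely why the authors route everything through the general theorem on decent actions on infinite restricted products rather than arguing purely within $\mathbb{H}^\infty$.
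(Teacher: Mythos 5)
Your proposal rests on a wrong dictionary between degree growth and isometry type on $\h$, and this derails its whole architecture. Algebraic elements of $\Cr_2(k)$ correspond exactly to \emph{elliptic} isometries of $\h$; parabolic isometries correspond to elements whose degrees grow linearly or quadratically (Theorem~\ref{thm:hyperboloid}), so a parabolic element is never algebraic. Consequently, the hypothesis that every element of $G$ is algebraic does not merely say ``$G$ has no loxodromics'': it says that $G$ acts on $\h$ purely elliptically. Conversely, your weakened reading cannot suffice: the cyclic group generated by the Jonqui\`eres twist $(x,y)\dashmapsto(x,xy)$ is finitely generated, contains no loxodromic isometry, and is unbounded, so ``finitely generated with no loxodromics implies bounded'' is simply false. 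You also misread the introductory example: every element of $\{(x,y)\dashmapsto(x+R(y),y)\}$ is algebraic (its iterates have constant degree), hence elliptic; that group is unbounded only because it is infinitely generated. So your Step 2, ``ruling out the parabolic case,'' addresses a non-issue, while the true difficulty is left untouched: a finitely generated group all of whose elements are elliptic need not have a bounded orbit on $\h$, because $\h$ is infinite-dimensional and non-proper, so no classical fixed-point theorem applies. That failure is precisely what the notion of a \emph{decent} action is designed to capture, and it cannot be resolved by the general classification of actions on Gromov-hyperbolic spaces invoked in your Step 1.

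The paper's route is different and concrete, and none of its three main ingredients appears in your proposal. First, Cantat's dichotomy (Remark~\ref{rmk_redu_Cantat}): a finitely generated subgroup of algebraic elements is either bounded or preserves a rational fibration; this replaces your Steps 1--2. Second, for a fibration-preserving $G$, the paper constructs the Jonqui\`eres complex, a restricted product $\bigoplus_{p\in\p^1}(X_p,x_p)$ whose factor $X_p$ is a \emph{tree} recording modifications of the fibre over the point $p\in\p^1$ --- not, as in your Step 3, factors indexed by ``base points of the generators.'' Third, the heart of the proof is Theorem~\ref{thm:maing}, decency of this action, proved via the dynamics of $\PGL_2(k)$ on $\p^1$: one embeds a finitely generated field of definition into a local field so that a semisimple element of infinite order acquires north--south dynamics, treats the unipotent case by a virtually abelian argument, and shows that an unbounded purely elliptic finitely generated group would contain an element with a ``persistent fibre,'' which is incompatible with that element fixing a point. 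Finally, a fixed point in the Jonqui\`eres complex means $G<\Aut(S)$ for a conic bundle $S$, and boundedness then follows from the action on the N\'eron--Severi lattice (Lemma~\ref{lem:aut}). The vague propagation argument in your Step 3 (``each word can only charge finitely many factors'') does not substitute for the $\PGL_2$ dynamics, which is where the actual work lies.
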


\begin{remark}\label{rmk_alg_closed_fields}
Note that the properties of being algebraic and being bounded are invariant under field extensions, so it is enough to show Theorem~\ref{thm:main} for algebraically closed fields.  
\end{remark}

\begin{remark}\label{rmk_redu_Cantat}
The first step towards the proof of Theorem~\ref{thm:main} is due to Cantat, who showed in \cite{Cantat_groupes_birat} that a finitely generated subgroup $G<\Cr_2(k)$ consisting of algebraic elements is bounded or preserves a rational fibration (see \cite{Lamy_Cremona} for a proof of this result for fields of arbitrary characteristic). It is therefore enough to show Theorem~\ref{thm:main} for finitely generated groups that preserve a rational fibration. 
\end{remark}

{In order to show Theorem~\ref{thm:main} for finitely generated groups that preserve a rational fibration}, we introduce the \emph{Jonqui\`eres complex} --- a $\mathrm{CAT}(0)$ cube complex with an isometric action of the group of birational transformations preserving a rational fibration, whose vertex stabilisers are bounded subgroups. Using a suitable description of this complex and the dynamics of $\PGL_2(k)$ on $\p^1$, we show that the action of our group on this complex is decent, as defined below. 

 \begin{defin}
\label{def:decent}
Let $X_0$ be a set. We say that a group $G_0$ acts on $X_0$ \emph{purely elliptically}, if each element of $G_0$ fixes a point of $X_0$. We say that a group $G_0$ acts on~$X_0$ \emph{decently} if 
\begin{itemize}
\item
each subgroup of $G_0$ with a finite orbit fixes a point of $X_0$, and
\item
each finitely generated subgroup of $G_0$ acting purely elliptically fixes a point of $X_0$.
\end{itemize}
\end{defin}

It is an easy exercise 
that if $G_0$ is the isometry group of a simplicial tree $X_0$, then $G_0$ acts on $X_0$ decently {(see \cite[Corollary 3 in \S6.5 and Example 6.3.4]{Serre})}. More generally, if $G_0$ is the isometry group of a $\mathrm{CAT}(0)$ cube complex $X_0$ with no infinite cubes, then $G_0$ acts on $X_0$ decently \cite{GLU_Neretin}. Similarly, if $G_0$ is the isometry group of a $\mathrm{CAT}(0)$ $2$-complex $X_0$ with rational angles, then $G_0$ acts on~$X_0$ decently \cite{NOP}. For further examples, see \cite{HO}.

\subsection{Applications.} 
The Cremona group $\Cr_2(k)$ can be equipped with the \emph{Zariski topology} (see for instance \cite{Serre_Bourbaki}). An \emph{algebraic subgroup} of $\Cr_2(k)$ is a Zariski closed bounded subgroup. This explains the terminology: an element in $\Cr_2(k)$ is algebraic if and only if it is contained in an algebraic subgroup. Note that for any $d\geq1$, the subset of $\Cr_2(k)$ consisting of elements of degree at most $d$ is closed (see \cite{Blanc_Furter}), hence a bounded subgroup is contained in an algebraic subgroup. An algebraic subgroup $G$ is always \emph{projectively regularizable}, i.e., there exists a birational map $\varphi\colon \p^2\dashrightarrow S$ such that $\varphi G\varphi^{-1}<\Aut(S)$ for some regular projective surface $S$. This follows from the theorems of Weil and Sumihiro or from the fact that the number of base-points of elements in an algebraic subgroup is uniformly bounded (we refer to \cite{Lamy_Cremona} for references and a proof of this fact, or to \cite{lonjou2021actions}). Theorem~\ref{thm:main} has therefore the following direct consequence:

\begin{corollary}
\label{cor:main}
		{Let $k$ be a field and}  let $G<\Cr_2(k)$ be a finitely generated subgroup such that every element of $G$ is algebraic. Then $G$ is projectively regularizable. \end{corollary}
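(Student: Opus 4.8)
The plan is to deduce the corollary directly from Theorem~\ref{thm:main} together with the two classical facts recalled in the paragraph preceding its statement. First I would invoke Theorem~\ref{thm:main}: since $G$ is finitely generated and every element of $G$ is algebraic, $G$ is bounded, so there is an integer $d$ with $\deg(g)\le d$ for all $g\in G$.

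Next I would upgrade boundedness to containment in an algebraic subgroup. The subset $\{f\in\Cr_2(k): \deg(f)\le d\}$ is Zariski closed by \cite{Blanc_Furter}, so the Zariski closure $\bar G$ of $G$ is still contained in it and is therefore bounded. Since the Zariski closure of a subgroup of $\Cr_2(k)$ is again a subgroup, $\bar G$ is a Zariski closed bounded subgroup, i.e.\ an algebraic subgroup of $\Cr_2(k)$ that contains $G$.

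Finally I would apply the regularization statement for algebraic subgroups. Every algebraic subgroup $H<\Cr_2(k)$ is projectively regularizable: by the theorems of Weil and Sumihiro, equivalently because the number of base-points of its elements is uniformly bounded (see \cite{Lamy_Cremona}), there is a birational map $\varphi\colon\p^2\dashrightarrow S$ to a regular projective surface $S$ with $\varphi H\varphi^{-1}<\Aut(S)$. Taking $H=\bar G$ and restricting the conjugation to the subgroup $G<\bar G$ yields $\varphi G\varphi^{-1}<\Aut(S)$, which is precisely the assertion that $G$ is projectively regularizable.

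There is no genuine obstacle here once Theorem~\ref{thm:main} is in hand: all the difficulty is absorbed into that theorem, and the two remaining implications are exactly the facts cited just before the corollary. The only points requiring a little care are that the Zariski closure of a bounded subgroup remains a subgroup and remains bounded, both immediate from the closedness of the bounded-degree loci, and that passing from $\bar G$ back down to $G$ preserves regularizability, which is automatic since any subgroup of $\Aut(S)$ is itself a group of automorphisms of $S$.
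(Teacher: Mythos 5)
Your proposal is correct and follows essentially the same route as the paper: Theorem~\ref{thm:main} gives boundedness, the closedness of the degree-bounded loci from \cite{Blanc_Furter} places $G$ inside an algebraic subgroup, and the Weil--Sumihiro regularization of algebraic subgroups finishes the argument, exactly as sketched in the paragraph preceding the corollary. The only point you elaborate beyond the paper is the passage to the Zariski closure $\bar G$, where one should note that translations and inversion in $\Cr_2(k)$ are homeomorphisms for the Zariski topology (even though $\Cr_2(k)$ is not a topological group), which is what makes $\bar G$ a subgroup.
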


From another point of view, algebraic elements correspond exactly to elements in~$\Cr_2(k)$ inducing an elliptic isometry on the infinite dimensional hyperbolic space~$\mathbb{H}^\infty$, on which $\Cr_2(k)$ acts \cite{Cantat_groupes_birat}. Theorem~\ref{thm:main} therefore states that the action of $\Cr_2(k)$ on~$\mathbb{H}^\infty$ is decent. 

While all algebraic elements are projectively regularizable, there exist also non-algebraic elements that are projectively regularizable. It is still unknown whether a finitely generated subgroup of the Cremona group containing only projectively regularizable elements is projectively regularizable or not. This question is equivalent to the question whether $\Cr_2(k)$ acts decently on the \emph{blow-up complex} --- a CAT(0) cube complex constructed in \cite{lonjou2021actions}. In \cite{GLU_Neretin}, the first and third authors together with Genevois positively answer this question when the base-field {$k$} is finite.

Since elements in $\Cr_2(k)$ preserving a rational fibration are projectively regularizable if and only if they are algebraic {(see for instance \cite[Table 1]{lonjou2021actions})}, Corollary~\ref{cor:main} directly implies the following.
	Let $G<\Cr_2(k)$ be a finitely generated subgroup preserving a rational fibration such that every element in $G$ is projectively regularizable. Then $G$ is projectively regularizable.

The notions of algebraic elements {and bounded subgroups} generalize to birational transformations of arbitrary regular projective surfaces and Theorem~\ref{thm:main} generalizes to this setting:
\begin{theorem}\label{thm:main_general}
	Let $S$ be a regular projective surface over a field $k$ and let $G <\Bir(S)$ be a finitely generated subgroup such that every element of~$G$ is algebraic. Then $G$ is {bounded}.
\end{theorem}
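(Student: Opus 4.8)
The plan is to reduce to the already established rational case (Theorem~\ref{thm:main}) via the Enriques--Kodaira classification, treating surfaces of non-negative Kodaira dimension by a finite-dimensional linear argument and ruled surfaces by the fibered machinery underlying Theorem~\ref{thm:main}. First I would reduce to $k=\bar k$ exactly as in Remark~\ref{rmk_alg_closed_fields}. Next, observe that $\Bir(S)$ depends only on the birational class of $S$, and that both ``algebraic'' and ``bounded'' are invariant under conjugation by a birational map $S\dashrightarrow S'$: indeed, as recalled before Corollary~\ref{cor:main}, an element is algebraic exactly when it lies in an algebraic subgroup, and a subgroup is bounded exactly when it is contained in an algebraic subgroup, and the notion of an algebraic subgroup is preserved by birational conjugation. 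Hence I may replace $S$ by any convenient model in its class and split into cases.

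Suppose first $\kappa(S)\geq 0$. Then the minimal model $S_{\min}$ is unique and $\Bir(S)=\Aut(S_{\min})$, so I study the representation $\rho\colon \Aut(S_{\min})\to \OG(\NS(S_{\min}))$, $f\mapsto f^\ast$, on the finite-rank N\'eron--Severi lattice. If $f$ is algebraic then $\deg(f^n)$ is bounded, hence the operator norms of $(f^n)^\ast$ are bounded; since $f^\ast$ is an integral matrix with spectral radius $1$ and bounded powers, it is diagonalisable with roots-of-unity eigenvalues, so $\rho(f)$ has finite order. Thus $\rho(G)$ is a finitely generated torsion subgroup of $\GL(\NS(S_{\min}))\cong\GL_r(\Z)$, hence finite by Schur's theorem. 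The kernel $K=\ker\rho$ consists of numerically trivial automorphisms; fixing an ample $H$, every $f\in K$ satisfies $f^\ast H\equiv H$ numerically, so $\deg(f)$ equals the fixed quantity $H^2$ and $K$ is bounded (this includes, e.g., translations of abelian surfaces, which are numerically trivial and of constant degree). Since $G\cap K$ has finite index in $G$ and is bounded, $G$ is bounded.

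Now suppose $\kappa(S)=-\infty$, so $S$ is either rational or ruled over a smooth curve $C$ with $g(C)\geq 1$. In the rational case $\Bir(S)\cong\Cr_2(k)$ and the statement is precisely Theorem~\ref{thm:main}. In the ruled case the ruling $\pi\colon S\dashrightarrow C$ is canonical, because $C$ is the base of the maximal rationally connected fibration and is a birational invariant; hence every element of $\Bir(S)$ preserves $\pi$, giving a homomorphism $\Bir(S)\to \Aut(C)$ with kernel $\PGL_2(k(C))$. This is exactly the situation of a group preserving a rational fibration, and I would run the proof of Theorem~\ref{thm:main} in this fibered setting: the Jonqui\`eres complex, built now over the base curve $C$, is a $\CAT(0)$ cube complex on which $\Bir(S)$ acts with bounded vertex stabilisers, and the action is decent. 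Since $G$ is finitely generated and every element is algebraic, hence elliptic, decency produces a global fixed vertex, so $G$ lies in a bounded stabiliser and is bounded.

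The main obstacle is the irrational ruled case, namely checking that the decent-action argument for a rational fibration, phrased in the proof of Theorem~\ref{thm:main} for a fibration of $\p^2$ over $\p^1$, survives over an arbitrary base curve $C$. For $g(C)\geq 2$ this should be routine, since $\Aut(C)$ is finite and, after passing to the finite-index subgroup $G\cap\PGL_2(k(C))$, the action is fibrewise and is governed by the restricted product of the Bruhat--Tits trees indexed by the points of $C$. The genuinely delicate point is $g(C)=1$: there the image of $G$ in $\Aut(C)$ can contain the infinite translation group of the elliptic curve, which permutes the factors of this restricted product, and one cannot simply pass to a fixed set of factors. This is exactly what the general theorem on decent actions on infinite restricted products announced in the abstract is designed to handle, an acting group permuting the factors while each factor carries a tree action, and I expect the irrational ruled case to follow by applying that theorem rather than the cube-complex formalism directly. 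A secondary point to verify carefully is the birational invariance of boundedness used in the case reductions and the boundedness of the numerically trivial automorphism group in all subclasses of non-negative Kodaira dimension.
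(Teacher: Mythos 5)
Your overall strategy is the same as the paper's: reduce to $k$ algebraically closed, split by Kodaira dimension, handle $\kappa(S)\geq 0$ via the action on the N\'eron--Severi lattice (this is the paper's Lemma~\ref{lem:aut}; your direct argument that the numerically trivial kernel is bounded because $\deg_H(f)=f^*H\cdot H=H^2$ is a fine substitute for the paper's citation that this kernel is an algebraic group), quote Theorem~\ref{thm:main} in the rational case, and reduce the irrational ruled case to a fibration-preserving group acting on a restricted product of fibre trees. However, there is a genuine gap exactly at the point you yourself flag as delicate, namely $g(C)=1$: you propose to close it by invoking ``the general theorem on decent actions on infinite restricted products,'' i.e.\ Theorem~\ref{thm:maing}. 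That theorem is stated, and provable by its method, only for $P=\p^1(k)$ with $H=\Aut(\p^1)=\PGL_2(k)$: its proof runs on the Jordan decomposition in $\PGL_2(k)$ and on the north--south dynamics (Definition~\ref{def:NS}) of infinite-order semisimple elements after embedding a finitely generated subfield into a local field. None of this structure is available for $H=\Aut(C)$ with $C$ elliptic --- a translation by a non-torsion point fixes no point of $C$ and has no north--south dynamics --- so the step ``apply that theorem'' fails as stated, and its proof does not transfer.

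What actually closes the elliptic (and higher-genus) case, and what the paper uses, is the abelian-case Lemma~\ref{lem:abel}, which is explicitly stated for an arbitrary index set $P$ and requires only that $\hbar(G)$ be abelian and either trivial or contain an element with only finitely many finite orbits on $P$. The observation missing from your proposal is that this hypothesis is automatic here: $\Aut(C)$ is finite when $g(C)\geq 2$ and virtually abelian when $g(C)=1$, so after passing to a finite-index subgroup one may assume $\hbar(G)$ is a finitely generated abelian group of translations of $C$; such a group is either finite (pass further to trivial image) or contains a translation by a non-torsion point, which has \emph{no} finite orbits at all, so the lemma applies. One then recovers a fixed point for all of $G$ via Lemma~\ref{lem:first_item} and converts it into boundedness via Lemma~\ref{lem:aut}. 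In other words, the elliptic base is \emph{easier} than the $\p^1$ base and is not a sub-case of Theorem~\ref{thm:maing}; your instinct that the infinite translation group permuting the factors is the crux is correct, but the resolution is the abelian lemma together with the no-finite-orbits observation, not the $\PGL_2$ theorem. (Your identification of the fibre trees with Bruhat--Tits trees is also not how the paper sets things up --- it uses trees of elementary transformations --- but that point is immaterial to the argument.)
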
However, the most interesting and difficult case is the one of rational surfaces. In order to keep the notation more accessible, we discuss and prove the general case in the separate Section~\ref{sec:remarks}. 

In Section~\ref{sec:degrees} we apply Theorem~\ref{thm:main} to give a first description of the asymptotic degree growth of finitely generated subgroups of $\Cr_2(k)$. This opens up new interesting questions about the dynamical behaviour of finitely generated subgroups of~$\Cr_2(k)$. Let $G<\Cr_2(k)$ be a finitely generated subgroup with a finite generating set $T$. Denote by $B_T(n)$ the set of all elements in $G$ of word length $l_T$ at most $n$ with respect to the generating set $T$, and define

\[
D_{T}(n):=\max_{f\in B_T(n)}\{\deg(f)\}.
\]
It has been shown in \cite{MR3818624} that there are only countably many integer sequences that can appear in this way. However, still very little is known about them. In the case where $T$ consists of a single element, the growth of the function $D_T$ has been {extensively} studied (see Theorem~\ref{thm:hyperboloid}). Theorem~\ref{thm:main} is the main ingredient for the following result, which we prove in Section~\ref{sec:degrees}. For two functions, $f$ and $g$ on $\N$, we write $f\asymp g$ if $f(x)\leq ag(bx)$ and $g(x)\leq cf(dx)$ for some $a,b,c,d>0$.
\begin{corollary}\label{cor:degree}
	Let $\kk$ be an algebraically closed field and let $G<\Cr_2(k)$ be a finitely generated subgroup with generating set $T$. Then one of the following is true:
	\begin{enumerate}
		\item All elements in $G$ are algebraic, $G$ fixes a point in $\h$, and $D_T(n)$ is bounded.	
		\item[(2a)] The group $G$ contains an element that induces a parabolic isometry of $\h$, $D_T(n)\asymp n$, and $G$ preserves a rational fibration. 
		\item[(2b)] The group $G$ contains an element that induces a parabolic isometry of $\h$, $D_T(n)\asymp n^2$, and $G$ preserves an elliptic fibration. 
		\item[(3)] The group $G$ contains an element that induces a loxodromic isometry of $\h$ and  $D_T(n)\asymp\lambda^n$ for $\lambda>0$. 
	\end{enumerate}
\end{corollary}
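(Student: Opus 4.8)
The plan is to read off the four cases from the action of $\Cr_2(k)$ on the hyperbolic space $\h$, translating between the degree and the displacement via the identity $\dist(o,f\cdot o)=\arccosh(\deg f)$, where $o$ is the class of a line; in particular $\dist(o,f\cdot o)=\log\deg(f)+O(1)$ uniformly. By the classification of isometries, each $f\in\Cr_2(k)$ is elliptic, parabolic, or loxodromic, and this is matched with its degree growth by Theorem~\ref{thm:hyperboloid}: elliptic means algebraic (bounded $\deg(f^n)$); loxodromic means $\deg(f^n)\asymp\lambda^n$ with $\lambda>1$ the dynamical degree; and parabolic splits into $\deg(f^n)\asymp n$, witnessing an invariant rational fibration, or $\deg(f^n)\asymp n^2$, witnessing an invariant elliptic fibration. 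Finally, submultiplicativity $\deg(gh)\le\deg(g)\deg(h)$ gives the a priori bound $D_T(n)\le(\max_{t\in T}\deg t)^n$, so the degree growth is at most exponential. I would then invoke the classification of groups acting on a Gromov hyperbolic space: $G$ is either bounded, horocyclic (unbounded, containing no loxodromic element and fixing a unique boundary point), or it contains a loxodromic element. These three alternatives will yield case (1), cases (2a)/(2b), and case (3) respectively.

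First I would dispose of cases (1) and (3). If $G$ fixes a point of $\h$ then every element is elliptic, hence algebraic; conversely, if every element of $G$ is algebraic then Theorem~\ref{thm:main} makes $G$ bounded, so the $G$-orbit of $o$ is bounded and its circumcenter is a $G$-fixed point of $\h$. In this case boundedness gives $D_T$ bounded, which is (1). If instead $G$ contains a loxodromic element $f$, then from $\deg(f^m)\asymp\lambda^m$ and $l_T(f^m)\le m\,l_T(f)$ one gets $D_T(n)\gtrsim\mu^n$ for some $\mu>1$; together with the exponential upper bound, and the fact that under the given $\asymp$ any two exponentials with base greater than $1$ are equivalent, this yields $D_T(n)\asymp\lambda^n$, which is (3).

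The remaining alternative is that $G$ is horocyclic, fixing a unique boundary point $\xi$ and containing no loxodromic element. Since $G$ is unbounded, by the first paragraph not every element is algebraic, so $G$ contains a parabolic element $f$, whose unique fixed boundary point is necessarily $\xi$. Writing $\theta$ for the isotropic class representing $\xi$, the element $f$ preserves the associated fibration $\pi$, which is rational or elliptic according to Theorem~\ref{thm:hyperboloid}. As $G$ is horocyclic there is no dilation at $\xi$, so $g_*\theta=\theta$ for every $g\in G$; since $\theta$ is the effective class of a pencil, each $g$ permutes the fibers of $\pi$, and hence all of $G$ preserves $\pi$. The two fibration types give the two alternatives (2a) and (2b), and the parabolic element $f$ already forces the lower bounds $D_T(n)\gtrsim n$ in the rational case and $D_T(n)\gtrsim n^2$ in the elliptic case, exactly as in the loxodromic estimate.

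The main obstacle is the matching polynomial upper bound in cases (2a) and (2b), i.e.\ showing that preserving the fibration caps the growth at $n$, respectively $n^2$. The strategy is to extract from the fibration structure a subadditive invariant of word length and compare it with the degree. For a rational fibration, $G$ embeds in the Jonqui\`eres group $\PGL_2(k(t))\rtimes\PGL_2(k)$; the $t$-degree $\delta(g)$ of the $\PGL_2(k(t))$-component is subadditive (matrix multiplication adds degrees and the base substitution preserves them), so $\delta(g)\le Cn$ for $l_T(g)\le n$, while $\deg(g)$ is bounded above by an affine function of $\delta(g)$ because the $\PGL_2(k)$ base factor contributes boundedly; this gives $D_T(n)=O(n)$ and hence (2a). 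For an elliptic fibration the analogous invariant is the translation part of $g$ in the Mordell--Weil group, normed by the square root of the N\'eron--Tate height; this norm is subadditive, hence $O(n)$, whereas $\deg$ grows like its square, yielding $D_T(n)=O(n^2)$ and hence (2b). Making precise the comparison between $\deg$ and these fibration-theoretic invariants---in particular the quadratic relation in the elliptic case---is where the real work lies, and is the step I expect to be the most delicate.
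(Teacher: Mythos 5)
Your overall architecture matches the paper's: case (1) via Theorem~\ref{thm:main} together with the circumcentre of a bounded orbit, case (3) via Theorem~\ref{thm:hyperboloid} plus submultiplicativity of degree, and for (2a)/(2b) the combination of fibration-invariance of all of $G$ with the linear/quadratic lower bounds coming from the parabolic element. Your treatment of (2a) is essentially the paper's argument in different clothing: the $t$-degree of the $\PGL_2(k(t))$-component is an affine reparametrisation of the number of base-points, and the paper's inequality $\#\bp(gh)\leq\#\bp(g)+\#\bp(h)$ together with $\deg(g)=\frac{\#\bp(g)+1}{2}$ is exactly your subadditivity claim; this part is fine. The only structural difference up to there is that you derive fibration-invariance of the whole group from the unique fixed boundary point of a horocyclic action on $\h$ (via Gromov's classification of actions on hyperbolic spaces), whereas the paper quotes a lemma of Urech; both routes work, yours using somewhat heavier general machinery.

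The genuine gap is the upper bound $D_T(n)=O(n^2)$ in case (2b), which you yourself flag as ``where the real work lies'' and do not carry out. This is precisely the step for which the paper needs a separate argument, Lemma~\ref{lem:halphen}: conjugate $G$ into $\Aut(S)$ for a Halphen surface $S$, pass to a finite-index subgroup that is abelian, consists of parabolic elements, and acts trivially on $D_0^{\perp}/D_0$ (where $D_0$ is the common fixed isotropic class), and then compute directly that
\[
(f_1)_*^{n_1}\cdots (f_k)_*^{n_k}A=A+\sum_i n_iR_i+\Bigl(\sum_i\tfrac{n_i(n_i-1)}{2}t_{ii}+\sum_{i<j}n_in_jt_{ij}\Bigr)D_0,
\]
which bounds $\deg_A$ of any word of length $n$ by a quadratic function of $n$. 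Your Mordell--Weil/N\'eron--Tate plan is morally the same computation (the coefficients $t_{ij}$ above are, in essence, height pairings of sections), but as stated it leaves unresolved exactly the points that constitute the proof: (i) a general $g\in G$ preserving the genus-$1$ fibration has no well-defined ``translation part'' --- it may act nontrivially on the base and by non-translations on fibres --- so one must first pass to a finite-index subgroup (and note that this does not change the asymptotics of $D_T$); (ii) composition twists the translation parts by the remaining parts, so subadditivity of $\sqrt{\hat{h}}$ along words requires invariance of the height under these twists; and (iii) the two-sided, uniform comparison $\deg(g)\asymp \hat{h}(\mathrm{trans}(g))+O(1)$ is precisely the quadratic relation you defer. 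Since the entire content of case (2b) beyond the lower bound is this upper estimate, the proposal as written does not prove the corollary; completing your height-theoretic route would in effect reconstruct the paper's Lemma~\ref{lem:halphen}.
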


It would be interesting to study the dynamical behaviour of the degrees of finitely generated subgroups in more detail. In Corollary~\ref{cor:degree}, the asymptotic behaviour does not depend on the choice of the finite generating set $T$, {since for any other generating set $T'$ there is a constant $d>0$ such that for all $n$ we have $B_{T'}(n)\subseteq B_T(dn)$, and so $D_{T'}(n)\leq D_{T}(dn)$.} For instance, one could ask, after fixing $T$, what is the precise asymptotic growth of $D_T(n)$, if the group $G$ contains a loxodromic element:

\begin{question}
	Let $G<\Cr_2(k)$ be a finitely generated subgroup containing an element that induces a loxodromic isometry of $\h$, with finite generating set $T$. What is the asymptotic growth of $D_T(n)$? Do we always have $D_T(n)\sim c\lambda^n$ for some constants $c,\lambda>0$? Here, we write $f\sim g$ if $f$ and $g$ are functions such that $\lim_{n\to\infty} \frac{f(n)}{g(n)}=1$.
	
\end{question}

In another direction, it has been shown that the \emph{dynamical degree} of $f\in\Cr_2(k)$, i.e., the number $\lambda(f):=\lim_{n\to\infty}\deg(f)^{1/n}$ is always an algebraic integer (more precisely, it is always a Pisot or Salem number, see \cite{blanc2016dynamical}). This leads to the following natural question, which is due to Cantat:

\begin{question}
	Let $G<\Cr_2(k)$ be a finitely generated subgroup with finite generating set $T$. Which real numbers can be realized as $\lim_{n\to\infty}D_T(G)^{1/n}$?
\end{question}

For instance, it could be interesting to construct examples of such subgroups  and generating sets such that $\lim_{n\to\infty}D_T(G)^{1/n}$ is transcendental. If $S$ is a {regular} projective surface and $f\in\Aut(S)$, then $\lambda(f)$ is the spectral radius of the induced transformation $f_*$ of $f$ on the Neron--Severi lattice of $S$. For this reason, the limit $\lim_{n\to\infty}D_T(G)^{1/n}$ should be seen as an analogue to the \emph{joint spectral radius}, which has been introduced in \cite{rota1960note} and has been studied by many authors (see for instance \cite{jungers2009joint} or \cite{breuillard2021joint}).

\subsection{Decent actions.}
In Section~\ref{sec:Jon} we will show that Theorem~\ref{thm:main} is a special case of a following more general result on groups acting decently on an infinite restricted product. 
 
\label{sum}
A \emph{pointed set} $(X_0,x_0)$ is a set $X_0$ and a point $x_0\in X_0$. The \emph{restricted product} $\bigoplus_{p\in P} (X_p,x_p)$ of a family $\{(X_p,x_p)\}_{p\in P}$ of pointed sets is the set of sections $\{y_p\}_{p\in P}$ with $y_p\in X_p$ such that all but finitely many $y_p$ are equal to $x_p$. 

Note that for finite $P$ we have $\bigoplus_{p\in P} (X_p,x_p)=\Pi_{p\in P} X_p$. For infinite $P$, we have that
$\bigoplus_{p\in P} (X_p,x_p)$ is a proper subset of $\Pi_{p\in P} X_p$.

If each $X_p$ is a simplicial tree and each $x_p$ is a vertex (which will be the case in our application towards Theorem~\ref{thm:main}), then $\bigoplus_{p\in P} (X_p,x_p)$ has a structure of a cube complex {whose cubes have the form $\Pi_{p\in P} I_p$, where all but finitely many $I_p$ are equal to $x_p$ and the remaining $I_p$ are edges of $X_p$}. This cube complex is $\CAT(0)$ though this will not be exploited explicitly in the current article.

Let $G_0$ be a group acting on $X_0$ and let $H$ be a group acting on~$P$. The (unrestricted) \emph{wreath product} $G_0\wr_P H$ of $G_0$ and~$H$ over $P$ is the semidirect product of $\Pi_{p\in P} G_p$, where $G_p=G_0$, with the group~$H$ acting on $\Pi_{p\in P} G_p$ by $h\cdot \{g_p\}_{p\in P}=\{g_{h^{-1}(p)}\}_{p\in P}$. 

Let $x_0\in X_0$. We will be considering the subgroup $G^\oplus$ of $G_0\wr_P H$ preserving $\bigoplus_{p\in P} (X_p,x_p)$, where $X_p=X_0$ and $x_p=x_0$, and where the action is defined as follows. For $g=\{g_p\}_{p\in P}\in \Pi_{p\in P}G_p$, we have $g \cdot \{y_p\}_{p\in P}=\{g_p(y_p)\}_{p\in P}$. For $h\in H$, we have $h\cdot \{y_p\}_{p\in P}=\{y_{h^{-1}(p)}\}_{p\in P}$.

It is not hard to verify that if $X_0$ is a simplicial tree, and $f$ is a combinatorial isometry of the cube complex $\bigoplus_{p\in P} (X_p,x_p)$, then $f$ induces a bijection of~$P$. Furthermore, if $G_0$ is the group of all simplicial isometries of $X_0$, then we have $f\in G^\oplus$ if and only if $H$ contains that bijection. For example, if $P=\{p,q\}$ with $X_0$ the real line tiled by unit intervals, then $\bigoplus_{p\in P} (X_p,x_p)$ is the square tiling of the Euclidean plane, and a $90^\circ$ rotation of the plane induces the bijection interchanging $p$ and $q$.

\begin{thm}
\label{thm:maing}
Let $G_0$ be a group acting decently on $X_0$. Let $P$ be the set of {the points of the projective line $\p^1(k)$} over an algebraically closed field $k$, and let $H=\mathrm{Aut}(\p^1)$. Then $G^\oplus$ acts decently on $\bigoplus_{p\in P} (X_p,x_p)$. \end{thm}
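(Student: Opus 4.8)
The plan is to study the action through the homomorphism $\pi\colon G^\oplus\to H=\aut(\p^1)$ and to decompose the restricted product along the orbits of the image on $P=\p^1(k)$. Fix a subgroup $\Gamma<G^\oplus$ satisfying one of the two hypotheses in the definition of decency (a finite orbit, or finite generation together with pure ellipticity), and set $\bar\Gamma=\pi(\Gamma)<\PGL_2(k)$. Because the coordinate permutations induced by $\Gamma$ lie in $\bar\Gamma$ and the factor maps act coordinatewise, the value of $\gamma\cdot y$ at $p$ depends only on coordinates in the $\bar\Gamma$-orbit of $p$; hence the $\Gamma$-action never mixes distinct $\bar\Gamma$-orbits and $\bigoplus_{p\in P}(X_p,x_p)$ decomposes $\Gamma$-equivariantly as a product over the orbits $\mathcal O$ of the factors $\bigoplus_{p\in\mathcal O}(X_p,x_p)$. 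The restriction maps to each factor are $\Gamma$-equivariant, so each hypothesis passes to the action on every factor. Moreover the constant section $o_{\mathcal O}$ equal to $x_0$ everywhere is $\Gamma$-fixed on all but finitely many $\mathcal O$: in the finitely generated case the finitely many sets $\{p:g^{(i)}_p(x_0)\neq x_0\}$ attached to generators meet only finitely many orbits, and a short computation with the group law shows that $o_{\mathcal O}$ is fixed on the rest; in the finite-orbit case the supports occurring in the orbit meet only finitely many $\mathcal O$, and $o_{\mathcal O}$ is fixed on the rest. Thus it suffices to find a $\Gamma$-fixed point in each of the finitely many exceptional factors; assembling these with $o_{\mathcal O}$ elsewhere produces a point of the restricted product fixed by $\Gamma$. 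This reduces the theorem to a single transitive $\bar\Gamma$-set $\mathcal O$.

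If $\mathcal O$ is finite, then $\bigoplus_{p\in\mathcal O}(X_p,x_p)=\prod_{p\in\mathcal O}X_p$ and $\Gamma$ permutes the coordinates through a finite quotient. I would build a fixed point by induction from a coordinate stabiliser. Fixing $p_*\in\mathcal O$, the subgroup $\Gamma_{p_*}=\{\gamma\in\Gamma:\bar\gamma(p_*)=p_*\}$ acts on the factor $X_{p_*}$ through a subgroup $\Lambda_{p_*}<G_0$; since $\Gamma_{p_*}$ has finite index in $\Gamma$ it inherits the relevant hypothesis (a finite orbit, respectively finite generation and pure ellipticity), and hence so does $\Lambda_{p_*}$. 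Decency of $G_0$ then yields a point $w_*\in X_0$ fixed by $\Lambda_{p_*}$, and transporting $w_*$ along a set of coset representatives $\{\gamma:\bar\gamma(p_*)=p\}_{p\in\mathcal O}$ defines a $\Gamma$-invariant section; well-definedness is precisely the $\Lambda_{p_*}$-invariance of $w_*$. In effect, decency is preserved under finite wreath products, and this step uses only the hypotheses on $G_0$.

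The remaining, and main, case is that of an infinite transitive orbit $\mathcal O$, where $\bar\Gamma$ is an infinite finitely generated subgroup of $\PGL_2(k)$ whose only invariant subsets of $\mathcal O$ are $\emptyset$ and $\mathcal O$. The induction of the previous paragraph now breaks down: the coordinate stabiliser has infinite index, and transporting a nontrivial $w_*$ would produce a section of infinite support, which does not lie in the restricted product. This is exactly where the dynamics of $\PGL_2(k)$ on $\p^1$ must be exploited. I would first use that a finitely generated infinite linear group contains an element $h$ of infinite order, so that $\bar\Gamma$ contains an element with at most two fixed points and infinite orbits, and then separate the elementary case (where $\bar\Gamma$ fixes a point or a pair of $\p^1$ and is conjugate into the affine group, with explicit orbit structure) from the non-elementary case (where $\bar\Gamma$ contains loxodromic elements with north--south dynamics and ping-pong is available). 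In each case the aim is to show that a $\Gamma$-fixed point of $\bigoplus_{p\in\mathcal O}(X_p,x_p)$ can only be supported at the finitely many ``creation sites'' dictated by the exceptional sets of the generators, and then to solve the resulting finite system of fixed-point equations using decency of $G_0$; the contracting dynamics of $h$ serves to push any candidate support off itself unless it sits at those sites, forcing finite support and ruling out the infinite-support sections produced by transport.

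The step I expect to be the genuine obstacle is this last one: reconciling the existence of a finitely supported fixed section with the transitivity of $\bar\Gamma$ on the infinite set $\mathcal O$. Already the example $\bar\Gamma=\langle h\rangle$ with $h$ of infinite order shows that the fixed point need not be the constant section $o_{\mathcal O}$, and that its support may be nonempty yet finite, sitting inside the exceptional sets of the generators where support is simultaneously ``created'' from $x_0$ and ``destroyed'' one step later. The difficulty is to make this phenomenon uniform over a general finitely generated $\bar\Gamma$ and to extract, from decency of $G_0$ alone, the compatibility needed to glue the coordinatewise fixed points into a single finitely supported section; note that no convexity of fixed-point sets is available, since $X_0$ is only a set, so the argument must route entirely through the two decency properties of $G_0$ and a dynamical statement of the form: for any finite $F\subseteq\mathcal O$ there are elements of $\bar\Gamma$ moving $F$ off itself in a controlled way, which both forbids spurious infinite supports and cuts the fixed-point equations down to a finite subsystem that decency of $G_0$ can solve.
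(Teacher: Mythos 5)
Your two reductions are sound, and they essentially parallel the paper's preliminary step: decomposing $\bigoplus_{p\in P}(X_p,x_p)$ along the orbits of the image in $H$, and handling finite orbits by inducing a fixed point from a coordinate stabiliser, is how the paper proves the first decency property (its Lemma~\ref{lem:first_item}). But these are the easy part of the theorem. All of the difficulty lives in the case you reach at the end --- a finitely generated, purely elliptic subgroup whose image $\bar\Gamma<\PGL_2(k)$ has an infinite orbit --- and there your text is an acknowledged plan rather than a proof: you yourself flag as ``the genuine obstacle'' the step of forcing a finitely supported fixed section and gluing the coordinatewise fixed points, and you do not resolve it. As written, the proposal therefore does not prove the theorem.

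Two concrete ideas from the paper are missing, and the first shows that the strategy you sketch would fail as stated. (i) You propose to use loxodromic elements with north--south dynamics and ping-pong, but over a general algebraically closed field such dynamics need not exist: for $k=\C$, a semisimple element with eigenvalue ratio $\lambda=e^{2\pi i\theta}$, $\theta$ irrational, has infinite order and infinite orbits off its two fixed points, yet in the metric topology of $\p^1(\C)$ it is an irrational rotation and contracts nothing. The paper's way around this is a field-theoretic trick: restrict to the subfield $\tilde k\subset k$ generated by the finitely many relevant data (the fixed points $p,q$, the eigenvalue $\lambda$, the image of the group, and the points over which group elements are singular), observe that $\tilde k$ is a finitely generated field, embed $\tilde k$ into a local field $K$ with $|\lambda|_K\neq 1$, run the north--south argument with $P=\p^1(\tilde k)$, and extend the resulting fixed point by the reference coordinates outside $\p^1(\tilde k)$. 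Nothing in your proposal plays this role, and without it ``north--south dynamics is available'' is simply false over $\C$. (ii) The mechanism by which pure ellipticity is actually exploited is absent. The paper fixes a reference vertex $z$ fixed by a chosen $t\in G$, defines biregularity/singularity of an element over $p\in P$ with respect to $z$, and shows that an element with a \emph{persistent fibre} (singular over $p$ for all large positive powers, biregular for all large negative ones) can fix no point of $\Jon$ (Remark~\ref{rem:per}); then, given $f$ singular over some $p$, it constructs $n$ such that every nonzero power of $t^nf$ moves $p$ off the finite singular sets of $f^{\pm1}$, so that $t^nf$ has a persistent fibre --- a contradiction. This construction (Lemmas~\ref{lem:abel} and~\ref{lem:semisimple}, including the separate treatment of elements swapping the two fixed points of $a$ and the final correction of the coordinates $z_p,z_q$) is exactly the ``compatibility'' you say you cannot extract from decency of $G_0$ alone; indeed it comes not from decency but from this persistent-fibre obstruction. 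Finally, note the paper's case division is by Jordan decomposition (a semisimple element of infinite order versus all elements unipotent or of finite order, the latter reduced to the abelian lemma by a finite-index argument), not your elementary/non-elementary split, whose elementary half you also leave unproved.
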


We will prove Theorem~\ref{thm:maing} in Section~\ref{sec:main}. {We will deduce Theorem~\ref{thm:main}  from Theorem~\ref{thm:maing} at the end of Section~\ref{sec:Jon}}.
Note that in Theorem~\ref{thm:maing}, we have to make some assumptions on $P$ and $H$. For example, assume that the set $X_0$ is {a finite simplicial tree and $G_0$ is the group of the simplicial isometries of~$X_0$}. Suppose that $H$ is a finitely generated infinite torsion group acting by left multiplication on $P=H$. Then the entire $G^\oplus$ is finitely generated and acts purely elliptically on $\bigoplus_{p\in P} (X_p,x_p)$, but does not have a fixed-point.

\section{Preliminaries}
In this section, we briefly recall some well-known facts about blow-ups and conic bundles. {By Remark \ref{rmk_alg_closed_fields}, it is enough to work over an algebraically closed field for our problem, hence} we assume our base-field $k$ to be algebraically closed. Unless mentioned otherwise, surfaces are assumed to be projective and smooth. 

\subsection{Subgroups of algebraic elements of automorphism groups} 
{Let $S$ be a surface over $k$.} An ample divisor $H$ on $S$ defines a degree function $\deg_H$ on $\Bir(S)$ by $\deg_H(f)=f^*H\cdot H$. An element $g\in \Bir(S)$ is then \emph{algebraic} if the degree sequence $\{\deg_H(f^n)\}_n$ is bounded.  A subgroup $G<\Bir(S)$ is \emph{bounded} if $\{\deg_H(f)\mid f\in G\}$ is bounded. The properties of being algebraic and being bounded do not depend on the choice of an ample divisor (see for example \cite{MR4133708}).

Let us observe that Theorem~\ref{thm:main} holds if we work with finitely generated groups of automorphism groups:

\begin{lemma}\label{lem:aut}
Let $S$ be a 
	surface and let $G<\Aut(S)$ be a finitely generated subgroup such that every element in $G$ is algebraic. Then $G$ is bounded. 
\end{lemma}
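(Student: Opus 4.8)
The plan is to reduce the problem to the action of $G$ on the Néron--Severi lattice $\NS(S)$ and to exploit the finite generation of $G$ together with the algebraicity of each element. First I would observe that each $f\in G<\Aut(S)$ induces a linear automorphism $f_*$ of the finitely generated free abelian group $\NS(S)$, preserving the intersection form, and that $\deg_H(f)=f^*H\cdot H$ is comparable to the operator norm of $f_*$ with respect to a fixed Euclidean structure on $\NS(S)_\R$. Thus boundedness of $G$ is equivalent to boundedness of the image $G_* < \OG(\NS(S))$ of $G$ in the orthogonal group of the lattice. An element $f$ being algebraic translates, via the standard fact that $\lambda(f)$ is the spectral radius of $f_*$ on $\NS(S)$, into $f_*$ having spectral radius $1$, hence (by a classical argument for isometries of a lattice with a form of signature $(1,r)$, using that $f_*$ preserves the integral structure and the hyperbolic form) $f_*$ is an \emph{elliptic} or \emph{parabolic} isometry of the associated hyperbolic space; algebraicity rules out parabolics, so each $f_*$ is elliptic, i.e.\ $f_*$ has all eigenvalues of modulus $1$ and is quasi-unipotent with finite order on a suitable sublattice.

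Next I would set $L=\NS(S)$ with its intersection form of signature $(1,\rho-1)$ and consider the action of $G_*$ on the associated hyperbolic space $\HH$ coming from the positive cone. Since every element of $G$ is algebraic, every element of $G_*$ acts as an elliptic isometry of $\HH$, that is, $G_*$ acts \emph{purely elliptically}. The group $\OG(L)$ is the isometry group of this finite-dimensional hyperbolic space, and the orbit of $G_*$ on the lattice is discrete; moreover $\HH$ is a $\CAT(-1)$ space, hence in particular a $\CAT(0)$ space of finite dimension. The key structural input is that the isometry group of such a space acts decently in the sense of Definition~\ref{def:decent}: by the finite-dimensional results quoted in the introduction (the $\CAT(0)$ cube complex statement of \cite{GLU_Neretin}, or more directly the general finite-dimensional $\CAT(0)$ fixed-point principle), a finitely generated group acting purely elliptically by isometries on a complete $\CAT(0)$ space of finite dimension has a bounded orbit and hence a fixed point. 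Since $G$ is finitely generated, so is $G_*$, and we conclude that $G_*$ fixes a point of $\HH$, or equivalently has a bounded orbit.

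Finally I would translate a global fixed point of $G_*$ in $\HH$ back into boundedness. A fixed point corresponds to a $G_*$-invariant positive class $v\in L\otimes\R$ in the interior of the positive cone; fixing such a $v$, the stabiliser in $\OG(L)$ of a positive vector is compact (it is contained in the orthogonal group of a positive-definite subspace complementary to $\R v$), so $G_*$ is contained in a compact, hence bounded, subgroup of $\OG(L)$. Boundedness of $G_*$ in operator norm gives a uniform bound on $\deg_H(f)=f^*H\cdot H$ for all $f\in G$, which is exactly the assertion that $G$ is bounded. The main obstacle in this argument is the passage from ``each element acts elliptically'' to ``the finitely generated group has a common fixed point'': this is precisely where decency of the isometry action on the finite-dimensional hyperbolic (or $\CAT(0)$) space is used, and one must verify that the hypotheses of the cited fixed-point theorem genuinely apply here, i.e.\ that the relevant space is finite-dimensional and complete and that the pure ellipticity of individual elements is correctly upgraded from the dynamical-degree condition. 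Once this fixed-point statement is in hand, the remaining steps are the routine linear-algebraic reductions sketched above.
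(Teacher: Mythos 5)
Your reduction to the Néron--Severi lattice is fine: the degree function on $\Aut(S)$ factors through the image $G_*$ in $\GL(\NS(S))$, algebraicity forces each $f_*$ to be an elliptic (not parabolic, since a parabolic would make $\deg_H(f^n)$ unbounded) isometry of the hyperbolic space $\mathbb{H}^{\rho-1}$ attached to the signature-$(1,\rho-1)$ form, and your last step (an invariant positive class has compact stabiliser in $\OG(\NS(S)\otimes\R)$, hence $G_*$ is bounded) is also correct. The genuine gap is the central fixed-point step. There is no ``general finite-dimensional $\CAT(0)$ fixed-point principle'' saying that a finitely generated group acting purely elliptically on a complete finite-dimensional $\CAT(0)$ space fixes a point: that statement is a well-known open problem, and the results quoted in the introduction of the paper are special cases that do not cover your situation. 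In particular, \cite{GLU_Neretin} concerns $\CAT(0)$ \emph{cube complexes} with no infinite cubes, and $\mathbb{H}^{\rho-1}$ is not a cube complex; \cite{NOP} concerns $2$-complexes with rational angles. So the sentence in which you pass from ``each $f_*$ is elliptic'' to ``$G_*$ has a bounded orbit'' is precisely the point needing proof, and the references you lean on do not prove it---pure ellipticity of individual elements, without further input, is not known to yield a global fixed point even on finite-dimensional $\mathbb{H}^N$.

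What closes the gap here is not $\CAT(0)$ geometry but integrality and discreteness, and this is how the paper argues. The group $G_*$ lies in $\GL(N^1(S))\simeq\GL_n(\Z)$, a discrete subgroup of $\GL_n(\R)$; an elliptic isometry belonging to a discrete group lies in a compact point-stabiliser and therefore has finite order (alternatively, \cite[Theorem~4.6]{cantat2015cremona}, cited in the paper, directly gives that the image of an algebraic automorphism has finite order). The place where finite generation genuinely enters is then Schur's theorem (or Minkowski's lemma together with torsion-freeness of a congruence subgroup): a finitely generated torsion subgroup of $\GL_n(\Z)$ is finite. A finite group obviously has a fixed point in $\mathbb{H}^{\rho-1}$, so $G_*$ is bounded and you can conclude as you do; the paper phrases the conclusion slightly differently, noting that the kernel of $\Aut(S)\to\GL(N^1(S))$ is an algebraic, hence bounded, group, so that $G$ is a finite extension of a bounded group and therefore bounded. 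In short: keep your linear-algebraic framing if you like, but replace the appeal to a $\CAT(0)$ fixed-point theorem by the finite-order-plus-Schur argument, which is both available and much shorter.
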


\begin{proof}
	The linear action of $\Aut(S)$ on the N\'eron--Severi lattice $N^1(S)$ of $S$ yields a homomorphism $\Aut(S)\to\GL(N^1(S))\simeq \GL_n(\Z)$, whose kernel is an algebraic group and therefore bounded. The image of an element $g\in G$ in $\GL(N^1(S))$ is of finite order, since $g$ is algebraic (see for example \cite[Theorem~4.6]{cantat2015cremona}). Using that $G$ is finitely generated, we obtain that the image of $G$ in $\GL(N^1(S))$  is finite. Therefore, $G$ is a finite extension of a bounded group and therefore bounded itself.
\end{proof}

\subsection{Bubble space, strong factorization, and base-points}
Let $S$ be a surface and let $s\in S$. Then there exists a surface $\tilde S$ and a morphism $\pi\colon\tilde S\to S$ such that the fibre $E$ over $s$ is isomorphic to $\p^1$ and $\pi$ induces an isomorphism between $\tilde S\setminus E$ and $S\setminus\{s\}$. The morphism $\pi\colon \tilde S\to S$ is called the \emph{blow-up} of $S$ in $s$, and it is unique up to isomorphism. Consider two distinct points $s,s'$ on a surface $S$ and their respective blow-ups $\pi_{s}$ and~$\pi_{s'}$. Blowing-up first~$s$ and then~$s'$ gives the same transformation up to isomorphism as blowing-up first~$s'$ and then~$s$. By abuse of language, we will say that $\pi_s$ and~$\pi_{s'}$ \emph{commute} and we denote the successive blow-up of~$s$ and then~$s'$ by $\pi_{s'}\pi_s=\pi_s\pi_{s'}$.

The \emph{bubble space} of $S$ is the set $\B(S)$ of triples $(t,T,\pi)$, where $\pi\colon T\to S$ is a birational morphism from a surface $T$ and $t$ is a point on $T$; two triples $(t,T,\pi)$ and $(t', T',\pi')$ are identified if $\pi^{-1}\pi'$ is a local isomorphism around~$t'$ mapping~$t'$ to~$t$. The bubble space can be thought of as the set of all points on $S$ and on surfaces obtained from~$S$ by successively blowing up  points. The points in $\B(S)$ contained in~$S$ are called \emph{proper} points. 

Zariski's strong factorization theorem states that every birational transformation $f\colon S\dashrightarrow T$ between  surfaces can be factored into blow-ups of  points. More precisely, there exists a surface $Z$ and a factorization
\[
\begin{tikzcd}
	& Z \arrow[ld, "\pi"'] \arrow[rd, "\rho"] &   \\
	S \arrow[rr, "f", dashed] &                                         & T
\end{tikzcd}
\]
where $\pi$ and $\rho$ are compositions of blow-ups of  points. Note that the points blown up by $\pi$ and $\rho$ can be seen as elements of the bubble space $\B( S)$ and $\B( T)$ respectively. Moreover, $Z$ can be chosen in such a way, and will be denoted by $Z_f$, that for any other such factorization
\[
\begin{tikzcd}
	& Z' \arrow[ld, "\pi'"'] \arrow[rd, "\rho'"] &   \\
	S \arrow[rr, "f", dashed] &                                         & T
\end{tikzcd}
\]
there exists a surjective morphism $\eta\colon Z'\to Z_f$ such that $\pi'=\pi\eta$ and $\rho'=\rho\eta$. If we require $Z_f$ to be minimal in this sense, this factorization is unique up to isomorphism and up to possibly changing the order of blowing up the points. {The morphism $\pi\colon Z_f\to S$ is called the \emph{minimal resolution} of $f$}, and the points (in~$\B(S)$) blown up by $\pi$ are called the \emph{base-points} of $f$. 

A base-point $s$ of $f$ is called \emph{persistent} if there exists $l\geq 1$ such that, for all $n\geq l$, the point $s$ is a base-point of $f^n$ but $s$ is not a base-point of $f^{-n}$. 

\subsection{Conic bundles}

A \emph{rational fibration} on a surface $S$ is a morphism $\pi\colon S\to C$, where $C$ is a 
curve, such that all the fibres are isomorphic to $\P^1$. Note that this is a more restrictive definition than the more usual one, which only asks that the fibres are rational curves.

Let $\pi\colon S\to C$ and $\pi'\colon S'\to C$ be rational fibrations. 
We say that a birational transformation $f \colon S\dashrightarrow S'$ \emph{preserves} the fibrations if there exists an automorphism $\hbar(f)\colon C\to C$ such that the following diagram commutes
\begin{center}	
\begin{tikzcd}
	S \arrow[d, "\pi"] \arrow[rr, "f", dashed] &  & S' \arrow[d, "\pi'"] \\
	C \arrow[rr, "\hbar(f)"]                  &  & C.                
\end{tikzcd}
	\end{center}

{ Let $\pi\colon S\to C$ be a rational fibration. An \emph{elementary transformation} $f\colon S\to S'$ is the composition of blowing up a point $s\in S$ followed by blowing down the strict transform of the fibre containing $s$. The surface $S'$ comes equipped with the rational fibration $\pi'\colon S'\to C$ defined by {$\pi=\pi'f^{-1}$}. Note that $f$ preserves these fibrations for $\hbar(f)=\id$.}

The following fact is well-known (see for instance \cite[Corollary~3.2]{schneider2022relations}):
\begin{proposition}\label{prop:elementary}
	Let $\pi\colon S\to C$ and $\pi'\colon S'\to C$ be rational fibrations. 
Every birational transformation $f\colon S\dashrightarrow S'$ preserving the fibrations can be factored { into a sequence of elementary transformations and a (fibration preserving) isomorphism}.
\end{proposition}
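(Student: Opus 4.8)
The plan is to prove Proposition~\ref{prop:elementary} by induction on the number of base-points of $f$, reducing to the base-point-free case which must be an isomorphism of rational fibrations. First I would fix the minimal resolution $\pi_f\colon Z_f\to S$ and $\rho_f\colon Z_f\to S'$ from Zariski's strong factorization as recalled above, and set $n$ to be the number of base-points of $f$ (equivalently, the number of blow-ups in $\pi_f$). If $n=0$, then $f$ is a birational morphism with birational inverse, hence an isomorphism; since it commutes with the fibrations via $\hbar(f)$, it is a fibration-preserving isomorphism, and we are done. Otherwise I would single out one base-point $s\in S$ of $f$ and perform a single elementary transformation $\varepsilon\colon S\to S_1$ centred at $s$ (blow up $s$, then blow down the strict transform of the fibre $F=\pi^{-1}(\pi(s))$ through $s$).

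The key step is then to verify that $f_1:=f\circ\varepsilon^{-1}\colon S_1\dashrightarrow S'$ preserves the fibrations $\pi_1\colon S_1\to C$ and $\pi'$ (with $\hbar(f_1)=\hbar(f)$, since $\hbar(\varepsilon)=\id$ by the definition of elementary transformation above), and that $f_1$ has strictly fewer base-points than $f$. For the fibration-preservation I would just compose the two commuting squares, the one for $\varepsilon^{-1}$ and the one for $f$. The count of base-points is the real content: blowing up $s$ resolves that base-point, and blowing down the strict transform of $F$ is the inverse of the blow-up of the point $\rho(F')$ on $S_1$ lying on the exceptional curve; one has to check that this blow-down does not reintroduce a base-point of $f_1$ that was not already present for $f$. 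I would do this by comparing minimal resolutions: the minimal resolution of $f_1$ is obtained from $Z_f$ by the same combinatorial moves, so that the base-points of $f_1$ are exactly those of $f$ other than $s$ (up to the identification of bubble-space points induced by $\varepsilon$). This gives $n-1$ base-points for $f_1$, and the induction hypothesis applies: $f_1$ factors as a product of elementary transformations and a fibration-preserving isomorphism, whence $f=f_1\circ\varepsilon$ does too.

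The main obstacle is the bookkeeping of base-points under the elementary transformation, i.e.\ making precise the claim that exactly one base-point is removed and none is created. The subtle case is when the chosen base-point $s$ is infinitely near rather than proper, or when several base-points of $f$ lie on the same fibre $F$: one must choose $s$ to be a base-point that is a \emph{proper} point of $S$ lying on $F$ and is, among the base-points on that fibre, the one that is blown up first in the minimal resolution, so that blowing down the strict transform of $F$ is legitimate and the remaining base-points transport correctly to $S_1$. Handling this cleanly is where I expect to spend the effort; once the count is established the induction is immediate. I note that one could instead invoke the standard structure theory of conic bundles (for which \cite[Corollary~3.2]{schneider2022relations} is cited), phrasing the same induction in terms of the discriminant and the elementary links of the Sarkisov program, but the hands-on blow-up/blow-down argument above is the most self-contained route.
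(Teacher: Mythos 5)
The paper itself gives no proof of Proposition~\ref{prop:elementary}: it is stated as well known and attributed to the literature (the citation to \cite{schneider2022relations}). So your outline has to be judged on its own merits, and while it follows the standard induction-on-base-points strategy, it has a genuine gap exactly at the step you yourself flag as ``the real content.''

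The gap is the counting claim: that for a proper base-point $s$ on the fibre $F$, the map $f_1=f\circ\varepsilon^{-1}$ has strictly fewer base-points. You assert this by saying ``the minimal resolution of $f_1$ is obtained from $Z_f$ by the same combinatorial moves,'' but that assertion is equivalent to what must be proved, and nowhere in your bookkeeping step do you use the hypothesis that $f$ preserves the fibrations --- which is precisely what makes it true. Two facts are needed. (a) The morphism $\rho\colon Z_f\to S'$ contracts the strict transform $F_Z$ of $F$; otherwise the blow-down of $\tilde F$ creates a \emph{new} base-point of $f_1$ at its image, and the count does not drop. This follows from fibration-preservation: $(\rho)_*(\pi_f)^*F$ is the class of the irreducible fibre $F'$ of $\pi'$, every maximal exceptional curve over a base-point is a $(-1)$-curve contracted by $\pi_f$ and hence, by minimality of $Z_f$, \emph{not} contracted by $\rho$, so exactly one such curve maps onto $F'$ and everything else in $(\pi_f)^*F$, in particular $F_Z$, is contracted. (b) $F_Z$ must be a $(-1)$-curve in $Z_f$, i.e.\ no infinitely near base-point of $f$ lies on the strict transform of $F$. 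If, say, the base-point $t_1$ infinitely near $s$ were the intersection point $E_s\cap\tilde F$, then $F_Z^2\le -2$, the resolution $Z_f\to S_1$, $Z_f\to S'$ of $f_1$ would already be minimal, and $f_1$ would have exactly as many base-points as $f$: your induction would not advance. Ruling this out again uses the fibration (e.g.\ by the configuration argument: $\rho$ would then have to contract a disjoint union containing a $(-2)$-curve as a component of its exceptional locus, which is impossible; or by the local computation writing $f$ near $F$ as $(x,y)\dashmapsto(x,M(x)y)$ with $M$ a matrix of regular functions not all divisible by $x$, so that the lift of $f$ after blowing up $s=\ker M(0)$ is regular at $E_s\cap\tilde F$ because $M(0)\neq 0$).

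Two further points. Your proposed handling of the ``subtle case'' --- choosing among the base-points on $F$ ``the one that is blown up first in the minimal resolution'' --- does not address the issue: blow-ups of distinct proper points commute, so ``first'' is not meaningful, and the real danger is not several proper base-points on $F$ (there is at most one, by the argument in (a)), but an infinitely near base-point sitting on the strict transform of $F$, as in (b). Finally, your base case is mis-argued: every birational morphism has a birational inverse, so that is no reason for it to be an isomorphism; the correct argument is that a non-isomorphic birational morphism between smooth projective surfaces contracts a $(-1)$-curve, a contracted curve must lie in a fibre because $f$ preserves the fibrations, and the fibres of a rational fibration are irreducible of self-intersection $0$, a contradiction.
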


A \emph{conic bundle} is a composition with a rational fibration $\pi\colon S\to C$ of a sequence of blow-ups $\tilde S\to S$, such that we blow up {in total} at most one point~$s\in S$ in each fibre. {In other words, $\pi\colon S\to C$ has only finitely many singular fibres, each of them consisting of 
 two rational curves of self-intersection $-1$, called also \emph{$-1$-curves} (the preimage of~$s$ under the blow-up and the strict transform of the fibre of $s$).}

\section{The Jonqui\`eres complex}
\label{sec:Jon}
In this section we reduce Theorem~\ref{thm:main} to Theorem~\ref{thm:maing}. We always assume our base-field $k$ to be algebraically closed, {which is justified by Remark \ref{rmk_alg_closed_fields}, and we assume the} surfaces to be projective and smooth. 

\subsection{The blow-up complex}
Let $T$ be a 
surface. In \cite{lonjou2021actions}, the authors constructed the \emph{blow-up complex} $\CC(T)$ --- a $\CAT(0)$ cube complex with an isometric action of $\Bir(T)$. Let us briefly recall the construction of $\CC(T)$. The vertices of $\CC(T)$ are equivalence classes of \emph{marked surfaces}, i.e., pairs $(S,\varphi)$, where $S$ is a 
surface and $\varphi\colon S\dashrightarrow T$ a birational transformation. Marked surfaces $(S,\varphi)$ and $(S',\varphi')$ are \emph{equivalent} if $\varphi^{-1}\varphi'\colon S'\to S$ is an isomorphism. 
Vertices (represented by) $(S,\varphi)$ and $(\tilde{S},\tilde{\varphi})$, are connected by an edge if ${\varphi}^{-1}\tilde \varphi\colon \tilde S\to {S}$ is the blow-up of a point (or its inverse). More generally, $2^n$ different vertices form an $n$-cube if there is a marked surface $(S,\varphi)$ and distinct points $s_1,\dots, s_n\in S$ such that these vertices are obtained by blowing up subsets of $\{s_i\}_{\{1\leq i \leq n\}}$. 

\subsection{The Jonqui\`eres complex}
Let $\F_0=\p^1\times\p^1$ with the rational fibration $\pi_0\colon \F_0\to\p^1$ onto the first factor. We define the \emph{Jonqui\`eres complex} $\mathscr{J}$ as the subcomplex of the blow-up complex $\mathcal C(\F_0)$ induced on the set of vertices represented by marked surfaces $(S,\varphi)$ such that, for $\pi=\pi_0\varphi$, the rational map
 $\pi\colon S\dashrightarrow\p^1$ is a conic bundle. 

\begin{remark}\label{remark_cube_Jonquièrescplx}
Consider $2^n$ vertices of $\mathscr{J}$ spanning a cube. This means that there exists a surface $(S,\varphi)$ and distinct points $s_1,\dots, s_n\in S$ such that these vertices are obtained by blowing up subsets of $\{s_i\}_{\{1\leq i \leq n\}}$. Since these vertices belong to the Jonquières complex, we have that the points~$s_i$ belong to distinct fibres of $\pi$. In other words, we have $\pi(s_i)\neq\pi(s_j)$ for all $i\neq j$.
\end{remark}

\begin{remark}\label{remark:not_convex}
Note that the Jonquières complex $\Jon$ is not a convex subcomplex of the blow-up complex $\mathcal C(\F_0)$. Indeed, consider the birational transformation $\varphi:(x,y)\mapsto (x,x^2+y)$. It has three base-points, but only one of them is proper. Let $\rho \colon S\to \F_0$ be the minimal resolution of $\varphi$. Then the vertex represented by $(S,\rho)$ is not a vertex of the Jonquières complex because more than one point has been blown-up in the same fibre. However, the vertex represented by $(S,
\rho)$ lies on a geodesic edge-path between the vertices $(\F_0,\id)$ and $(\F_0, \varphi^{-1})$.

Nevertheless, by Proposition~\ref{prop:elementary}, the $1$-skeleton of $\mathscr{J}$ it is isometrically embedded in the $1$-skeleton of $\mathcal C(\F_0)$.
\end{remark}

For any $p\in \p^1$, consider the subcomplex $X_p$ of $\mathscr{J}$ induced on the vertices represented by the marked surfaces $(S,\varphi)$, where $\varphi\colon S\dashrightarrow \F_0$ induces an isomorphism between $S\setminus \pi^{-1}(p)$ and $\F_0\setminus \pi_0^{-1}(p)$. 

\begin{lemma}
\label{lem:tree}
For any $p\in \p^1$, the subcomplex $X_p$ is a tree.
\end{lemma}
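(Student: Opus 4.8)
The plan is to show that $X_p$ is a connected graph with no cycles. First I would check that $X_p$ is one-dimensional: by Remark \ref{remark_cube_Jonquièrescplx} the points blown up in any cube of $\mathscr{J}$ lie in distinct fibres of $\pi$, whereas every vertex of $X_p$ is obtained from $\F_0$ by modifications over the single fibre $\pi_0^{-1}(p)$, so $X_p$ contains no square. I would then divide the vertices into two types: a vertex $(S,\varphi)$ is \emph{clean} if the fibre of $\pi=\pi_0\varphi$ over $p$ is smooth (so that $S$ is a $\p^1$-bundle over $\p^1$), and \emph{singular} if that fibre consists of two $(-1)$-curves. Since a single blow-up turns a smooth fibre into a singular one and conversely, while the conic bundle condition forbids blowing up a second point in the fibre over $p$, every edge of $X_p$ joins a clean vertex to a singular vertex, so $X_p$ is bipartite.

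Next I would analyse the valences. A clean vertex has one neighbour for each point of its fibre $\pi^{-1}(p)\cong\p^1$, obtained by blowing that point up. A singular vertex $\hat S$ contains exactly two $(-1)$-curves over $p$, namely the exceptional curve $E$ and the strict transform $F'$ of the fibre, and its only neighbours in $X_p$ are the two clean vertices obtained by contracting $E$ or $F'$: contracting $E$ returns the vertex we blew up, while contracting $F'$ performs the elementary transformation and yields a different clean vertex. Thus every singular vertex has valence exactly $2$ and subdivides an edge between two distinct clean vertices. Contracting these subdivision points, it suffices to show that the graph $Y_p$ whose vertices are the clean vertices and whose edges are the elementary transformations over $p$ is a tree; then $X_p$, being a subdivision of $Y_p$, is a tree as well.

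To prove that $Y_p$ is a tree I would combine two ingredients. For connectedness I would invoke Proposition \ref{prop:elementary}: since $\varphi$ is an isomorphism away from $p$, all base-points of $\varphi$ lie over $p$, so $\varphi$ factors into elementary transformations centred over $p$ together with an isomorphism, which exhibits an edge-path in $X_p$ from $(S,\varphi)$ to $(\F_0,\id)$. For the absence of cycles I would pass to the local picture at $p$. Every $(S,\varphi)\in X_p$ shares, via $\varphi$, the generic fibre $\p^1_K$ of $\pi_0$ over the function field $K=k(\p^1)$ of the base, with $\hbar(\varphi)=\id$. A clean model agreeing with $\F_0$ away from $p$ is determined by its structure over the discrete valuation ring $\mathcal{O}_{\p^1,p}$, since an automorphism of a $\p^1$-bundle that is the identity away from one fibre is the identity; such models correspond bijectively to homothety classes of rank-two $\mathcal{O}_{\p^1,p}$-lattices $L$ in the two-dimensional $K$-vector space $W$ with $\mathbb{P}(W)=\p^1_K$, equivalently, after completing, to vertices of the Bruhat--Tits tree of $\PGL_2(k((t)))$. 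Under this dictionary an elementary transformation over $p$ is exactly an elementary modification of the lattice, i.e.\ passage to an adjacent lattice class, so $Y_p$ is isomorphic to that Bruhat--Tits tree, which has no cycles.

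The main obstacle is the identification in the last paragraph: setting up the bijection between clean marked models and lattice classes, and verifying that elementary transformations over $p$ correspond precisely to the edges of the lattice tree. In particular I expect the delicate points to be that a local modification at $p$ extends uniquely to a global conic bundle that still agrees with $\F_0$ away from $p$, and that the two contractions of a singular vertex land on genuinely distinct clean vertices so that no edge of $Y_p$ is a loop. Once this dictionary is established, acyclicity of $Y_p$ is inherited from the Bruhat--Tits tree, and together with the connectedness furnished by Proposition \ref{prop:elementary} this shows $Y_p$, and hence $X_p$, is a tree.
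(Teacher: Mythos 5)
Your proposal is correct, but its core differs genuinely from the paper's own argument. Your preliminary structural analysis (no squares in $X_p$ by Remark~\ref{remark_cube_Jonquièrescplx}; every edge joins a \emph{clean} vertex to a \emph{singular} one; singular vertices have valence exactly two, so $X_p$ is a subdivision of the graph $Y_p$ of clean vertices and elementary transformations) is accurate, and it makes explicit what the paper only uses implicitly through the alternation of blow-ups and blow-downs along an edge-path. Connectedness is treated the same way in both proofs, via Proposition~\ref{prop:elementary}; note only that the factorization it provides is not a priori centred over $p$, so you must rearrange it using the fact that elementary transformations performed in distinct fibres commute (this is exactly how the paper phrases it), to ensure the intermediate marked surfaces stay in $X_p$. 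The genuine divergence is in the acyclicity step. The paper argues directly and self-containedly: along a non-backtracking edge-path starting at the clean vertex $(\F_0,\id)$, the composite birational map sends the whole fibre $\p^1_p$ minus the first blown-up point to a single point, hence is never an isomorphism, so the path can never return to its starting vertex. You instead identify $Y_p$ with the lattice-class tree of $\PGL_2$ at $p$ and inherit acyclicity from the classical fact that this graph is a tree (Serre's \emph{Trees}, Ch.~II); that identification is correct, and the delicate points you flag are the right ones --- lifting the $\p^1$-bundle marking over $\p^1\setminus\{p\}\cong\A^1$ to a vector-bundle marking (possible since $\Pic(\A^1)$ is trivial), the gluing of a local lattice into a global bundle trivialized off $p$ (Weil's matrix-divisor description), and the distinctness of the two contractions of a singular vertex. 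One simplification: there is no need to pass to $k((t))$, since the lattice-class graph over any discrete valuation ring, in particular over $\mathcal{O}_{\p^1,p}$ itself, is already a tree. The trade-off is clear: the paper's argument is short, elementary, and uses nothing beyond the definitions, whereas your route requires setting up a nontrivial dictionary whose careful verification would be longer than the paper's whole proof; in exchange it yields a sharper structural conclusion --- $X_p$ is the subdivided Bruhat--Tits tree of $\PGL_2$ at $p$, with links of clean vertices identified with $\p^1(k)$ --- which is of independent interest and would make the action of the Jonqui\`eres group on $\mathscr{J}$ more transparent.
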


\begin{proof}
First note that $X_p$ is connected as a consequence of Proposition \ref{prop:elementary} and the fact that two elementary transformations performed in distinct fibres commute.	

Second, consider the vertex $v_0=(\F_0,\id)$ in $X_p$, and let $\p^1_p\subset \F_0$ be the fibre over~$p$. Let $v_0,v_1,v_2,v_3,\ldots$ be the consecutive vertices on an edge-path without backtracks in $X_p$. The surface $v_1$ is obtained from $v_0$ by blowing up a point {$s$} in $\p^1_p$ to a $-1$-curve. The surface $v_2=(S_2,\varphi_2)$ is obtained from $v_1$ by blowing down the other $-1$-curve in the fibre over $p$ to a point $s_2$. In particular, the birational transformation $\varphi^{-1}_2$ sends the entire {$\p^1_p\setminus \{s\}$} to $s_2$. 

Continuing, for $i\geq 2$ the surface $v_{2i-1}$ is obtained from $v_{2i-2}$ by blowing up a point in the fibre over $p$ distinct from $s_{2i-2}$ to a $-1$-curve, and the surface $v_{2i}$ is obtained from $v_{2i-1}$ by blowing down the other $-1$-curve in the fibre over $p$ to a point $s_{2i}$. Thus the birational transformation $\varphi^{-1}_{2i}$ sends the entire {$\p^1_p\setminus \{s\}$} to $s_{2i}$. In particular, $\varphi_{2i}$ {is not an isomorphism}, and so $v_{2i}\neq v_0$. This proves that there is no cycle in $X_p$, and so $X_p$ is a tree.
\end{proof}

We choose a preferred vertex
$x_p=(\F_0,\id)$ in each $X_p$, and we consider the family of pointed sets $\{(X_p,x_p)\}_{p\in \p^1}$.

\begin{lemma}\label{lem_product_tress}
	The cube complex $\mathscr{J}$ is isomorphic to $\bigoplus_{p\in \p^1} (X_p,x_p)$.
\end{lemma}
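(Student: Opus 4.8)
The plan is to construct an explicit isomorphism of cube complexes $\Phi\colon \Jon \to \bigoplus_{p\in \p^1}(X_p,x_p)$ and to verify it is a bijection on vertices that respects the cube structure. The key geometric input is the \emph{strong factorization} (Zariski) together with Proposition~\ref{prop:elementary}: every marked surface $(S,\varphi)$ representing a vertex of $\Jon$ has the property that $\pi=\pi_0\varphi$ is a conic bundle, so all but finitely many fibres of $\pi$ are isomorphic (via $\varphi$) to the corresponding fibres of $\pi_0$ over $\F_0$. Concretely, for each proper point $p\in\p^1$ I would record the ``local data'' of $(S,\varphi)$ over the fibre $\pi^{-1}(p)$, namely the marked surface obtained by performing over $p$ exactly the blow-ups/blow-downs that $\varphi$ performs there while leaving every other fibre untouched. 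This local datum is a vertex of $X_p$, and it equals the basepoint $x_p=(\F_0,\id)$ for all but finitely many $p$ (precisely those $p$ lying under a base-point of $\varphi$ or its inverse). Thus I obtain a well-defined section $\{y_p\}_{p\in\p^1}$ with almost all $y_p=x_p$, i.e.\ an element of the restricted product, and I set $\Phi(S,\varphi)=\{y_p\}_{p\in\p^1}$.

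First I would make the definition of $y_p$ precise and check it is independent of the chosen representative $(S,\varphi)$ of the vertex (equivalent marked surfaces differ by an isomorphism, which does not change the fibre-by-fibre data). Second I would show $\Phi$ is injective: if two vertices have the same local datum over every fibre, then by Proposition~\ref{prop:elementary} the transformation between the two surfaces factors into elementary transformations, all of which must be trivial since they agree fibrewise, so the vertices coincide. Third I would show surjectivity: given a section $\{y_p\}$ with $y_p\neq x_p$ only for $p$ in a finite set $\{p_1,\dots,p_m\}$, each nontrivial $y_{p_j}\in X_{p_j}$ is, by the proof of Lemma~\ref{lem:tree}, the result of an explicit finite sequence of elementary transformations confined to the fibre over $p_j$; since elementary transformations in distinct fibres commute, I can compose them over the finitely many $p_j$ to build a single conic bundle $(S,\varphi)$ with $\Phi(S,\varphi)=\{y_p\}$.

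Finally I would promote this vertex bijection to a cube-complex isomorphism. Here I would invoke Remark~\ref{remark_cube_Jonquièrescplx}: a cube of $\Jon$ is given by a marked surface $(S,\varphi)$ and distinct points $s_1,\dots,s_n\in S$ lying in \emph{distinct} fibres, with the $2^n$ vertices obtained by blowing up subsets of $\{s_i\}$. Under $\Phi$, blowing up $s_i$ (which lies over $\pi(s_i)=:q_i$) changes only the $q_i$-coordinate of the section, moving $y_{q_i}$ along an edge of the tree $X_{q_i}$, and the $q_i$ are pairwise distinct. Hence the $2^n$ vertices of the cube map exactly to the $2^n$ corners of a product cube $\prod I_{q_i}$ in $\bigoplus_{p}(X_p,x_p)$, matching the description of cubes in the restricted product recalled in the excerpt. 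This shows $\Phi$ carries cubes to cubes bijectively, so it is an isomorphism of cube complexes. I expect the main obstacle to be the careful bookkeeping in surjectivity and the independence-of-representative check: one must verify that the fibrewise ``localisation'' of a conic bundle is itself a legitimate vertex of $X_p$ (a conic bundle modified in a single fibre), and that reassembling the local data via commuting elementary transformations produces a globally well-defined birational map whose minimal resolution is again a conic bundle.
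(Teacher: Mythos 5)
Your proposal is correct and takes essentially the same approach as the paper: the paper likewise decomposes the marking $\varphi$ of a vertex into blow-ups and blow-downs each confined to the fibre over a single point $p_i\in\p^1$, uses that such operations over distinct points commute to extract the per-fibre marked surfaces $(S_p,\varphi_p)$ forming a section of the restricted product, and asserts that this vertex correspondence is bijective and extends to an isomorphism of the cube complexes. Your additional verifications (independence of representative, injectivity and surjectivity via Proposition~\ref{prop:elementary} and commutation, and the cube-by-cube matching) simply flesh out steps the paper states without detailed proof.
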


\begin{proof}[Proof of Lemma~\ref{lem_product_tress}]
Let $(S,\varphi)$ be 
a vertex of $\mathscr{J}$. The birational transformation $\varphi \colon S\dashrightarrow \F_0$ is decomposed as $\varphi=\sigma_n\cdots\sigma_1$, where each $\sigma_i$ is the blow-up of a point or the blow-down of a $-1$-curve in the fibre over a point $p_i\in \p^1$. For $p_i\neq p_j$, 
we have that $\sigma_i$ and $\sigma_j$ commute. Thus there is a finite subset $Q\subset \p^1$ such that 
$\varphi=\Pi_{p\in Q}\varphi_p$, where each (of the commuting) $\varphi_p$ is a composition of~$\sigma_i$ with $p_i=p$. 
Consider the marked surfaces $\varphi_p\colon S_p \dashrightarrow\F_0$ for $p\in Q$, and $\varphi_p=\id, S_p=\F_0$ for $p\in \p^1\setminus Q$. They represent vertices of~$X_p$. Then $\{(S_p,\varphi_p)\}_{p\in \p^1}$
defines a vertex of $\bigoplus_{p\in \p^1} (X_p,x_p)$. 

This is a bijective correspondence between the vertices of $\mathscr{J}$ and $\bigoplus_{p\in \p^1} (X_p,x_p)$, and it extends to an isomorphism on the entire complexes.
\end{proof}

The coordinate $y_p$ of a point $y\in  \bigoplus_{p\in \p^1} (X_p,x_p)=\mathscr{J}$ should be thought of as the ``marked fibre''
in the surface corresponding to $y$ over the point $p$. Thus modifying the coordinate $y_p$ of $y$ corresponds to performing an alternating sequence of blow-ups of points and blow-downs of $-1$ curves in the fibre over $p$ of the surface corresponding to $y$.

\smallskip

	The \emph{Jonqui\`eres group} is the subgroup of $\Bir(\F_0)$ consisting of the \emph{Jonqui\`eres transformations} $f$ that preserve the rational fibration $\pi_0\colon \F_0\to \p^1$. {The Jonqui\`eres group acts on the vertex set of $\Jon$ by $f\cdot(S, \varphi)=(S, f\varphi)$,
and this action extends to an action by isometries on the entire $\Jon$.}
	
	\begin{remark}\label{rmk_subjonq_fixpoint}
	A subgroup $G$ of the Jonqui\`eres group is a subgroup of $\Aut(S)$ for some 
conic bundle $\pi \colon S\to \p^1$ if and only if $G$ fixes a point in $\Jon$. This is because if $G$ fixes an interior point of a cube in~$\mathscr{J}$ described via a surface $S$ in Remark~\ref{remark_cube_Jonquièrescplx}, then $G$ {fixes this cube and in particular $G$} fixes the vertex corresponding to~$S$. {By the definition of a marked surface this is equivalent to} $G< \Aut(S)$.
	\end{remark}

{We conclude with the following.}

\begin{proof}[Proof of Theorem~\ref{thm:main}]
By Remark \ref{rmk_alg_closed_fields}, we can assume the {base-field $k$} to be algebraically closed. By Remark \ref{rmk_redu_Cantat}, we can assume $G$ to be a subgroup of the Jonqui\`eres group.  

As a consequence of Lemma \ref{lem_product_tress}, the complex $\bigoplus_{p\in \p^1} (X_p,x_p)$ inherits the action of the Jonqui\`eres group from $\Jon$. Because $X_p$ are trees (see Lemma~\ref{lem:tree}), {their isometry groups are decent}. All the elements of the Jonqui\`eres group induce elements of $\mathrm{Aut}(\p^1)$ on $\p^1$. 
	Moreover, by Remark \ref{rmk_subjonq_fixpoint} and by Lemma~\ref{lem:aut}, to prove Theorem~\ref{thm:main} for a subgroup $G$ of the Jonqui\`eres group, we need to find a fixed-point for $G$ in $\Jon$, {which is guaranteed by} Theorem~\ref{thm:maing}. 
\end{proof}

\section{Proof of the main theorem}
\label{sec:main}

In this section we prove Theorem~\ref{thm:maing}. We keep the notation $\Jon$ for the general  $\bigoplus_{p\in \p^1} (X_p,x_p)$ (and not just for the Jonqui\`eres complex). We denote by $\hbar$ the quotient map $G_0\wr_P H\to H$. Abusing the notation, for $f\in G_0\wr_P H$ and $p\in P$, we denote by $f(p)\in P$ the point $\hbar(f)(p)$.

\begin{conv}
	\label{conv:fibre}
	Note that for $f=hg\in G^\oplus$, with $h\in H$ and $g\in\Pi_{p\in P} G_p$, and for $y\in \Jon$, the value $f(y)_p$ equals $g_{h^{-1}(p)}\big(y_{h^{-1}(p)}\big)$ and so it depends only on $h,g_{h^{-1}(p)}$ and $y_{h^{-1}(p)}$. Henceforth, slightly abusing the notation, we will refer to $f(y)_p$ as $f(y_{f^{-1}(p)})$.
\end{conv}

This conveys the fact that for a Jonqui\`eres transformation $f$, the ``marked fibre'' in $f(S)$ over $p$ depends only on $f$ and the ``marked fibre'' in $S$ over $f^{-1}(p)$.

{
\begin{lemma}
\label{lem:first_item}
Let $G_0$ be a group acting on $X_0$ so that each subgroup of $G_0$ with a finite orbit fixes a point of $X_0$ (which is the first item of Definition~\ref{def:decent} for $G_0$). Then each subgroup of $G<G^\oplus$ with a finite orbit $Y$ in $\Jon$ fixes a point of $\Jon$ (which is the first item of Definition~\ref{def:decent} for $G$).
\end{lemma}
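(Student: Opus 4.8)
The plan is to build a single $G$-fixed point $z=\{z_p\}_{p\in P}\in\Jon$ by working one $\hbar(G)$-orbit of $P$ at a time, using the hypothesis on $G_0$ to supply local fixed points and the finiteness of $Y$ to control the support. Set $\bar G=\hbar(G)$, and for $f\in G$ and $q\in P$ let $f_{*,q}\in G_0$ denote the map $y_q\mapsto f(y)_{f(q)}$, which by Convention~\ref{conv:fibre} depends only on $f$ and $q$; these obey the cocycle identity $(ff')_{*,q}=f_{*,f'(q)}\circ f'_{*,q}$. A point $z$ is $G$-fixed if and only if $f_{*,q}(z_q)=z_{f(q)}$ for all $f\in G$ and $q\in P$, a condition relating only coordinates lying in one common $\bar G$-orbit; so it suffices to define $z$ orbit by orbit, verifying equivariance there while keeping $\operatorname{supp}(z)=\{p: z_p\neq x_0\}$ finite.

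I would first record two observations. Writing $Y=\{y^{(1)},\dots,y^{(m)}\}$, put $V_q=\{y^{(i)}_q: 1\le i\le m\}\subset X_0$ and let $S=\bigcup_i\operatorname{supp}(y^{(i)})$, a finite subset of $P$ with $V_q=\{x_0\}$ for $q\notin S$. Each $f\in G$ permutes $Y$, say $f(y^{(i)})=y^{(\sigma(i))}$, so $f_{*,q}(y^{(i)}_q)=y^{(\sigma(i))}_{f(q)}$ and hence $f_{*,q}$ maps $V_q$ bijectively onto $V_{f(q)}$. As $f_{*,q}$ is a bijection of $X_0$, this gives $|V_q|=|V_{f(q)}|$, so $|V_q|$ is constant along each $\bar G$-orbit; since $\{q: |V_q|\ge 2\}\subseteq S$ is finite, any orbit on which $|V_q|\ge 2$ is finite.

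The construction then goes as follows. Fix a $\bar G$-orbit $O$ and a point $q_0\in O$. The set $L_{q_0}=\{f_{*,q_0}: f\in G,\ f(q_0)=q_0\}$ is a subgroup of $G_0$ (by the cocycle identity) preserving the finite set $V_{q_0}$, so by the hypothesis on $G_0$ it fixes a point $z_0\in X_0$; when $|V_{q_0}|=1$ I take $z_0$ to be the unique element of $V_{q_0}$, which is automatically $L_{q_0}$-fixed. For $q\in O$ I pick $f\in G$ with $f(q_0)=q$ and set $z_q:=f_{*,q_0}(z_0)$. This is independent of the choice of $f$, since two choices differ by an element stabilising $q_0$, whose fibre map lies in $L_{q_0}$ and fixes $z_0$; and the cocycle identity yields the equivariance $f_{*,q}(z_q)=z_{f(q)}$ for all $f\in G$ and $q\in O$.

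It remains to check that $\operatorname{supp}(z)$ is finite, and this is the one real obstacle. On an orbit with $|V_{q_0}|\ge2$ it is clear, as the orbit is finite. On an orbit with $|V_{q_0}|=1$ the chosen $z_0$ lies in $V_{q_0}$, whence $z_q=f_{*,q_0}(z_0)\in f_{*,q_0}(V_{q_0})=V_q$ for every $q\in O$, so $z_q=x_0$ whenever $q\notin S$; thus the contribution of $O$ to the support lies in $S$. Assembling the choices over all orbits gives $\operatorname{supp}(z)\subseteq S$, so $z\in\Jon$ is the desired $G$-fixed point. The difficulty is precisely this finiteness: transporting arbitrary local fixed points around orbits only produces an equivariant section of the full product $\prod_{p}X_p$, and landing in the restricted product $\Jon$ forces the choice $z_0\in V_{q_0}$ on the orbits where $V_{q_0}$ is a single point — the only orbits that can be infinite — which confines the support to $S$.
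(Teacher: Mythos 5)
Your proof is correct and takes essentially the same route as the paper's: both constructions proceed orbit-by-orbit, apply the hypothesis on $G_0$ to the projection of the stabiliser of a basepoint $q_0$ (a subgroup preserving the finite set $V_{q_0}=Y_{q_0}$, hence with a finite orbit), transport the resulting local fixed point around the orbit using its well-definedness, and control the support via $Y\subset\Jon$. The only cosmetic difference is the case division --- the paper splits by finite versus infinite $\hbar(G)$-orbits, while you split by $|V_{q_0}|\geq 2$ versus $|V_{q_0}|=1$; since $|V_q|$ is constant along orbits and $\{q: |V_q|\geq 2\}$ is finite, the two divisions nearly coincide.
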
}

\begin{proof}
{
For $p\in P$, let $Y_p=\{y_p \colon y\in Y\}$ denote the finite set of the coordinates of~$Y$ in the factor $X_p$.}

{First, we consider each finite orbit $O\subset P$ of $\hbar(G)<H$. If for all $p\in O$ we have $Y_p=\{x_p\}$, then we define $z_p=x_p$ for all $p\in O$. 
Since $Y\subset \Jon$, there are only finitely many finite orbits $O$ of $\hbar(G)<H$ with $Y_p\neq \{x_p\}$ for some $p\in O$. For each such $O$, we choose $o\in O$. Let $G'_o$ be the projection to $G_o$ of the stabiliser of $o$ in~$G$, that is, the group of all $g_o$ over $g\in G$ fixing $o$. Note that $Y_o$ contains an orbit of~$G_o'$. Thus $G_o'$ has a finite orbit and hence a fixed-point $z_o\in X_o$ by the hypothesis of the lemma. We set $z_p=f(z_o)$ (see Convention \ref{conv:fibre}) for any $f\in G$ with $f(o)=p$, which only depends on $p$ and not on $f$ since $z_o$ was fixed by $G_o'$. }

{Now, let $O\subset P$ be an infinite orbit of $\hbar(G)<H$. Since $Y\subset \Jon$, there is $o\in O$ with $Y_o=\{x_o\}$. Consequently, for any $p\in O$, we have that $Y_p$ consists only of a single element, which we call $z_p$.
Since $Y\subset \Jon$, this $z_p$ equals $x_p$ for all but finitely many $p$.}

{Consequently, we have $z=\{z_p\}_p\in \Jon$. By construction, $z$ is a fixed-point of~$G$.}
\end{proof}

\subsection{Biregularity}

The following encapsulates the idea of a Jonqui\`eres transformation $f$ having a persistent base-point in the fibre over a point $p\in P$.

\begin{defin}
\label{def:pers} Let $z=\{z_r\}_r\in \Jon$ be a distinguished vertex. Let $p\in P$ and $f\in G^\oplus$. We say that $f$ is \emph{biregular over $p$} (with respect to $z$) if $f(z)_{f(p)}=z_{f(p)}$ (or, in our notation from Convention~\ref{conv:fibre}, $f(z_p)=z_{f(p)}$). Equivalently, for $f=hg$ with $h\in H, g\in \Pi_pG_0$, we have $g_p(z_p)=z_{f(p)}$ (which equals $z_{h(p)}$). Otherwise, we say that $f$ is \emph{singular} (w.r.t.\ $z$) \emph{over} $p$.

An element $f\in G^\oplus$ \emph{has persistent fibre over $p\in P$} if there exists $l\geq 1$ such that, for all $n\geq l$, we have that $f^n$ is singular over $p$ and $f^{-n}$ is biregular over $p$. 
\end{defin}

\begin{rem}
\label{rem:regular}
Note that $f$ is biregular over $p$ if and only if $f^{-1}$ is biregular over~$f(p)$. Furthermore, if $f$ is biregular over $p$ and $f'$ is biregular over $f(p)$, then $f'f$ is biregular over $p$.
\end{rem}

\begin{rem}
\label{rem:fixed}
The point $z$ is a fixed-point for $f$ if and only if $f$ is biregular over all $p\in P$.
\end{rem}

\begin{rem}
\label{rem:per}
If $f$ has persistent fibre over $p$, then it does not have a fixed-point $y\in \Jon$. Indeed, the orbit of $p$ under~$\langle f\rangle$ is infinite, since $f^m(p)=p$ with $m>0$ would imply, by Remark~\ref{rem:regular}, that $f^{lm}$ is simultaneously biregular and singular over~$p$. Furthermore, for all $n\geq l$, we have $z_{f^{n}(p)}\neq f^{n}(z_p)=f^n\big(f^n(z_{f^{-n}(p)})\big)$.
For large~$n$, we have $y_{f^{\pm n}(p)}=z_{f^{\pm n}(p)}$, contradicting $y_{f^{n}(p)}=f^{2n}(y_{f^{-n}(p)})$.
\end{rem}

\subsection{Abelian case}

The following proves Theorem~\ref{thm:maing} in the case where $\hbar(G)$ is abelian.  Note that here, as well as in Lemma~\ref{lem:semisimple}, we do not need to assume that $P$ is the projective line $\p^1$.

\begin{lemma}
\label{lem:abel}
Let $G_0$ be a group acting decently on $X_0$. Let $H$ be a group acting on $P$, and let $G<G^\oplus$ be such that either $\hbar(G)<H$ is trivial or it contains an element that has only finitely many finite orbits on $P$. If $G$ is finitely generated, acts purely elliptically on $\Jon$, and $\hbar(G)$ is abelian, then $G$ fixes a point of $\Jon$.
\end{lemma}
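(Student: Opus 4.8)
The plan is to construct an explicit common fixed-point $z=\{z_p\}_{p\in P}\in\Jon$ and then invoke Remark~\ref{rem:fixed}, by which $z$ is fixed by $f$ precisely when $f$ is biregular over every $p$. I would first dispose of the case where $\hbar(G)$ is trivial. Then $G<\Pi_{p\in P}G_p$ acts coordinate-wise, and for each $p$ the projection $\rho_p(G)<G_0$ is finitely generated and, by Remark~\ref{rem:fixed} applied to the fixed-points supplied by pure ellipticity, acts purely elliptically on $X_p=X_0$; the second item of decency for $G_0$ then yields a fixed-point $z_p$. Since $G$ preserves $\Jon$ and is finitely generated, all but finitely many coordinates of each generator fix $x_0$, so $\rho_p(G)$ fixes $x_p$ for all but finitely many $p$; setting $z_p=x_p$ there gives $z\in\Jon$.

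For the main case fix $t\in G$ with $\tau=\hbar(t)$ having only finitely many finite orbits, and let $F\subset P$ be the (finite) union of the finite $\langle\tau\rangle$-orbits. Because $\hbar(G)$ is abelian, every $\hbar(f)$ commutes with $\tau$ and hence preserves orbit sizes, so $F$ is $\hbar(G)$-invariant and the finite $\hbar(G)$-orbits are precisely those contained in $F$. I would build $z$ one $\hbar(G)$-orbit at a time. On a finite orbit $O\subset F$ I pick $o\in O$; the stabiliser $\stab_G(o)$ has finite index $|O|$, hence is finitely generated, and its coordinate projection $G_o'=\{g_o: hg\in\stab_G(o)\}<G_0$ is a finitely generated group acting purely elliptically on $X_o$ (via Remark~\ref{rem:fixed}, exactly as in Lemma~\ref{lem:first_item}). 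The second item of decency then gives a fixed-point $z_o\in X_o$, which I propagate by $z_p=f(z_o)$ (Convention~\ref{conv:fibre}) for any $f\in G$ with $f(o)=p$, this being well-defined because $z_o$ is $G_o'$-invariant. This fixes all coordinates in the finite set $F$.

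It remains to treat the infinite $\hbar(G)$-orbits $O$, on which $\langle\tau\rangle$ acts freely. Let $S=\bigcup_i S_i$ be the (finite) union of the supports of chosen fixed-points $w^i$ of a finite generating set $\{s_i\}$ of $G$. If an orbit $O$ is disjoint from $S$, then for each $p\in O$ and each $s_i$ both $w^i_p=x_0$ and (using that $O$ is $\hbar(G)$-invariant, so $s_i(p)\in O\setminus S_i$) $w^i_{s_i(p)}=x_0$, whence $s_i$ is biregular over $p$ with respect to the base-point; by Remark~\ref{rem:regular} the whole of $G$ is then biregular over every $p\in O$, so $x|_O$ is already $G$-fixed and I set $z_p=x_p$ on $O$. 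This disposes of all but the finitely many ``active'' infinite orbits meeting $S$, on each of which I must still produce a $G$-equivariant, finite-support family $\{z_p\}_{p\in O}$.

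The hard part will be exactly this last step, and it is where I expect to use pure ellipticity through the persistent-fibre obstruction of Remark~\ref{rem:per}. Taking the fixed-point $w$ of $t$ as reference vertex, I would use that $t$ is biregular over every $p$ (Remark~\ref{rem:fixed}) together with Remark~\ref{rem:regular} to see that conjugation by $t^n$ translates the (finite) singular set of any $f\in G$ by $\tau^n$ along the free $\langle\tau\rangle$-orbit; the goal is to show that a family of markings over an active orbit that could not be straightened to a $G$-equivariant finite-support family would force some word in $t$ and the $s_i$ to be singular over a fixed $p$ for all large positive powers while biregular for all negative powers, i.e.\ to have a persistent fibre over $p$, contradicting pure ellipticity. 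The genuine obstacle is that the natural candidate $G_o'=\rho_o(\stab_G(o))$ need \emph{not} be finitely generated once $O$ is infinite, so the second item of decency is unavailable directly; the abelian structure together with the freely translating element $t$ must be leveraged either to reduce to a finite-orbit situation (so that the \emph{first} item of decency applies) or to exclude persistent fibres outright. Once every active orbit is handled, the pieces assemble into a single $z\in\Jon$ --- its support is finite because only finitely many orbits are active and each contributes finite support --- which is fixed by $G$ by Remark~\ref{rem:fixed}.
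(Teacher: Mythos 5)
Your reduction steps are sound and largely parallel the paper's: the case of trivial $\hbar(G)$, the treatment of finite $\hbar(G)$-orbits via finite-index stabilisers, propagation, and the second item of decency, and the observation that infinite orbits disjoint from the supports of the generators' fixed-points can be marked by the base-point, are all correct. But the proof is not complete: on the ``active'' infinite orbits you only state a goal (force a persistent fibre) and explicitly leave open how to achieve it, and this is precisely where the whole content of the lemma lives. Concretely, what is missing is a proof of the following claim, which is the paper's key step: if $z$ is a fixed-point of $t$ and $p$ lies in an infinite $\langle t\rangle$-orbit, then \emph{every} $f\in G$ is biregular w.r.t.\ $z$ over $p$. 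The paper proves this by letting $B$ be the finite set of points lying in infinite $\langle t\rangle$-orbits over which $f$ or $f^{-1}$ is singular w.r.t.\ $z$, and producing $n>0$ such that $f_n:=t^nf$ satisfies $f_n^i(p)\notin B$ for all $i\in\Z\setminus\{0\}$; then, since $t$ is biregular everywhere w.r.t.\ $z$, singularity of $f$ over $p$ would make $f_n$ or $f_n^{-1}$ have a persistent fibre over $p$ (with $l=1$), contradicting Remark~\ref{rem:per}. Finding such an $n$ is the technical heart and uses commutativity of $\hbar(G)$ in a case analysis: if $f^l(p)=t^k(p)$ for some $l>0$ and $k\in\Z$, one takes $n=m_0+|k|$, where $m_0$ is chosen so that $t^{\pm m}f^j(p)\notin B$ for all $m\geq m_0$ and $0\leq j<l$; otherwise only finitely many $k\in\Z$ satisfy $t^k(p)\in\langle f\rangle B$, and any $n$ exceeding the maximum of their absolute values works.

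Note also that the claim dissolves the obstacle you single out (that $\rho_o(\stab_G(o))$ need not be finitely generated when the orbit $O$ is infinite): no new markings are needed on infinite orbits at all, and no equivariant family has to be ``straightened'', because the coordinates of $z$ itself are already preserved by all of $G$ over every point in an infinite $\langle t\rangle$-orbit. So the gap is not a peripheral verification but the main argument; your sketch ``non-straightenable $\Rightarrow$ persistent fibre'' does not by itself produce an element with a persistent fibre --- one must arrange that the entire forward and backward orbit of $p$ under the auxiliary element $t^nf$ avoids the singular set, which is exactly what the paper's choice of $n$ accomplishes.
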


\begin{proof} If $\hbar(G)$ is trivial, then $G<\Pi_{p\in P} G_p$. Let $Q\subset P$ be a finite set such that each generator $\{g_p\}_{p\in P}$ of $G$ satisfies $g_p(x_p)=x_p$ for $p\notin Q$. For each $q\in Q$, the projection of $G$ to $G_q$ is a purely elliptic subgroup, so it fixes a point $y_q\in X_q$. Setting $y_p=x_p$ for $p\notin Q$, we obtain a fixed-point $\{y_p\}_p$ for $G$ in $\Jon$.

Thus from now on we can assume that there is $t\in G$ with $\hbar(t)$ having only finitely many finite orbits on $P$. Let $z\in \Jon$ be a fixed-point of $t$. Then $t$ is biregular w.r.t.\ $z$ over each $p\in P$ {by Remark~\ref{rem:fixed}}. 
Let $Q\subset P$ be the union of the finite orbits of~$\langle t\rangle$. Note that $Q$ is preserved by $\hbar(G)$, since $\hbar(G)$ is abelian. 

Let $p\in P\setminus Q$. We claim that any $f\in G$ is biregular w.r.t.\ $z$ over $p$.
To justify the claim, let $B$ be the finite set of $b\in P\setminus Q$ over which $f$ or $f^{-1}$ is singular. We will show that for some $n>0$, setting $f_n=t^nf$, we have $f_n^i(p)\notin B$, for all $i \in \Z\setminus \{0\}$.
To find such $n$, suppose first that $f^l(p)=t^k(p)$ for some $k,l\in \Z$ with $l> 0$. Then choose $m_0>0$ such that, for all $m\geq m_0$, and all $0\leq j<l$, we have $t^{\pm m}f^j(p)\notin B$. It suffices then to take $n=m_0+|k|$,
since for any $i\neq 0$ we have $f_n^i(p)=t^{ni}f^i(p)=t^{\pm m}f^j(p)$ for some $0\leq j<l$ and $m\geq m_0$. Otherwise, if there is no $l\neq 0$ with $f^l(p)\in \langle t \rangle(p)$, then
there are finitely many $k\in \Z$ with $t^k(p)\in \langle f\rangle B$, since $B$ is finite. It suffices then to take $n$ larger than the maximum of their $|k|$.

Thus we have $f_n^i(p)\notin B$, for all $i \in \Z\setminus \{0\}$. If $f$ was singular w.r.t.\ $z$ over $p$, then by Remark~\ref{rem:regular} $f_n\in G$ or its inverse would have persistent fibre over $p$ (with $l=1$), contradicting Remark~\ref{rem:per} and justifying the claim. 

For each orbit $O$ of $\hbar(G)$ in $Q$, choose $o\in O$ and $y_o\in X_o$ that is fixed by the projection to $G_o$ of the stabiliser of $o$ in $G$, which is of finite index in $G$, hence finitely generated. For each $f\in G$, choose $y_{f(o)}=f(y_o)$ (notation from Convention~\ref{conv:fibre}),
which only depends on $f(o)$ and not on $f$, since $y_o$ was fixed by the projection to $G_o$ of the stabiliser of $o$ in~$G$. 
Setting $y_p=z_p$ for $p\notin Q$ gives us a fixed-point $y$ for~$G$.
\end{proof}

\subsection{Semisimple case}

In order to treat the case of non-abelian $\hbar(G)$, we use the dynamics of elements in $\Aut(\p^1)=\PGL_2(k)$. 
An element $a\in\PGL_2(k)$ is \emph{semisimple}, if it is conjugate to a diagonal element. With respect to suitable local coordinates, the automorphism of $\p^1$ induced by $a$ is given by $z\mapsto \lambda z$ for some $\lambda\in k^*$. Let us observe that $a$ is of infinite order if and only if $\lambda$ is not a root of unity. In this case, $a$ fixes exactly two points of $\p^1$ and it does not have any other finite orbit. If $k$ admits a norm $|\cdot|$ with $|\lambda|\neq 1$, then $a$ has north-south dynamics, defined below, in the topology of $\p^1$ induced by~$|\cdot|$. 

\begin{defin} 
\label{def:NS}
Let $a$ be a homeomorphism of a Hausdorff topological space $P$, fixing $p,q\in P$, and having the following property. For any disjoint open sets $U\ni p,V\ni q$, there is~$n$ such that $a^n(P\setminus V)\subset U$ and  $a^{-n}(P\setminus U)\subset V$.
We then say that $a$ has \emph{north-south dynamics}.
\end{defin}

\begin{lemma}
\label{lem:semisimple}
Let $G_0$ be a group acting decently on $X_0$. Let $H$ be a group acting on $P$. Let $a\in H$ have north-south dynamics {with $p,q$ as in Definition~\ref{def:NS}} for some Hausdorff topology on $P$ in which $H$ acts by homeomorphisms. Suppose also that $\mathrm{Stab}(p)\cap\mathrm{Stab}(q)<H$ is abelian.
If $G<G^\oplus$ is finitely generated, acts purely elliptically on $\Jon$, and $\hbar(G)$ contains $a$, then $G$ has a fixed-point in $\Jon$.
\end{lemma}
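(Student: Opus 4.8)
The plan is to prove Lemma~\ref{lem:semisimple} by building a global fixed-point $y=\{y_p\}_p\in\Jon$ coordinate by coordinate, using the north-south dynamics of $a$ to control the singular behaviour of elements of $G$ over each point $p\in P$. First I would fix a vertex $z\in\Jon$ that is a fixed-point of $a$, which exists because $a$ has north-south dynamics and hence only the two finite orbits $\{p\},\{q\}$ on $P$, so Lemma~\ref{lem:abel} (applied to the cyclic, hence abelian, group $\langle a\rangle$) gives such a $z$. With respect to this $z$, the element $a$ is biregular over every point of $P$ by Remark~\ref{rem:fixed}. The two fixed points $p,q$ of $a$ play a special role, so I would treat the orbit structure of $\hbar(G)$ on $P\setminus\{p,q\}$ separately from the behaviour at $p$ and $q$.

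The key step is to show that every $f\in G$ is biregular w.r.t.\ $z$ over each point $r\in P\setminus\{p,q\}$. Here is where the north-south dynamics enters: for a fixed $f$, let $B$ be the finite set of points over which $f$ or $f^{-1}$ is singular. If $f$ were singular over some $r\notin\{p,q\}$, I would like to precompose with a high power of $a$, say $f_n=a^n f$, so that the entire forward and backward $\langle f_n\rangle$-orbit of $r$ avoids $B$. The mechanism is that the $a$-orbit of any point not equal to $p,q$ converges to $q$ under forward iteration and to $p$ under backward iteration (north-south dynamics), so high powers of $a$ push points into small neighbourhoods of $q$ and $p$ that miss the finite set $B$; since $a$ is biregular everywhere, $f_n$ is singular over $r$ exactly when $f$ is. Once the orbit avoids $B$, Remark~\ref{rem:regular} lets me conclude that $f_n$ (or its inverse) would have persistent fibre over $r$ with $l=1$, contradicting the pure ellipticity of $G$ via Remark~\ref{rem:per}. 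This forces $f$ to be biregular over $r$.

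Having established biregularity over $P\setminus\{p,q\}$, I would set $y_r=z_r$ for all $r\notin\{p,q\}$; because every $f\in G$ is biregular there and $\hbar(G)$ permutes these points, the compatibility $f(z_r)=z_{f(r)}$ guaranteed by biregularity means these coordinates are jointly preserved by $G$. It remains to choose the two coordinates $y_p,y_q$. The stabiliser $\Stab(p)\cap\Stab(q)<H$ is abelian by hypothesis, and the preimage in $G$ of this stabiliser is a finite-index subgroup $G'$ of $G$ (since $\{p,q\}$ has at most the two $a$-orbits it belongs to, and $a$ already fixes both), hence finitely generated and acting purely elliptically. I would apply the abelian case, Lemma~\ref{lem:abel}, to $G'$ to obtain coordinates $y_p,y_q\in X_p,X_q$ fixed by the relevant projections; the condition in Lemma~\ref{lem:abel} that $\hbar(G')$ contain an element with finitely many finite orbits is met by $a\in\hbar(G')$. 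Finally I would extend these choices across the (at most two) $\hbar(G)$-orbits of $\{p,q\}$ by the rule $y_{f(p)}=f(y_p)$, well-defined since $y_p$ is fixed by the stabiliser projection, exactly as in the closing paragraph of the proof of Lemma~\ref{lem:abel}.

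The main obstacle I anticipate is the biregularity step over $P\setminus\{p,q\}$: making precise the claim that a suitable power $a^n$ can be chosen \emph{uniformly} enough to clear the entire bi-infinite $\langle f_n\rangle$-orbit of $r$ from $B$. One must handle separately the case where the $\langle f\rangle$-orbit of $r$ interacts with the $a$-fixed points and the generic case, much as the proof of Lemma~\ref{lem:abel} splits according to whether $f^l(r)\in\langle a\rangle(r)$ for some $l\neq0$. The north-south dynamics replaces the ``finitely many finite orbits'' hypothesis of the abelian case, and the delicate point is that an orbit of $f_n$ could in principle re-enter $B$ after leaving it; I would rule this out by choosing $n$ so large that all the relevant iterates land in the two small neighbourhoods of $p$ and $q$ on which $f,f^{-1}$ are biregular, using that $B$ is finite and disjoint from $\{p,q\}$.
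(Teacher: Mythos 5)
Your central mechanism --- composing $f$ with a high power of the north--south element so that the resulting element has persistent fibre over $r$ with $l=1$, contradicting Remark~\ref{rem:per} --- is exactly the engine of the paper's proof, but the proposal has genuine gaps. The first concerns lifts: the hypothesis only gives $a\in\hbar(G)$, so the element $a$ itself (with trivial tree coordinates) need not lie in $G$; hence $f_n=a^nf$ need not lie in $G$, pure ellipticity of $G$ says nothing about $f_n$, and the contradiction with Remark~\ref{rem:per} disappears. This one is repairable: choose $t\in G$ with $\hbar(t)=a$, let $z$ be a fixed point of $t$ (it exists directly by pure ellipticity of $G$ --- no appeal to Lemma~\ref{lem:abel} is needed), and work with $t^nf\in G$; note that both changes must be made together, since the argument needs the composing element to be biregular everywhere w.r.t.\ $z$. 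The second gap is not repairable within your outline: your ``key step'' (every $f\in G$ is biregular w.r.t.\ $z$ over \emph{every} $r\notin\{p,q\}$) is false when $f(r)\in\{p,q\}$. Biregularity over such $r$ asserts $f(z_r)=z_{f(r)}$, and nothing ties the coordinate $z_p$ of your $G$-independently chosen $z$ to the values $f(z_r)$. Concretely, take $t=a$ pure and $f$ with $\hbar(f)(r)=p$ whose tree coordinate at $r$ moves the basepoint to some $w\neq x_p$; then $G=\langle t,f\rangle$ fixes the point $y$ with $y_p=w$ and $y_s=x_s$ elsewhere, so all hypotheses hold, yet $f$ is singular over $r$ w.r.t.\ the basepoint $z=x$, which is a legitimate fixed point of $t$. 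Your dynamical argument indeed breaks down here: $f_n(r)=a^nf(r)=p$ for every $n$, and $p\in B$ because $f^{-1}$ is singular over $p$ --- so $B$ need not be disjoint from $\{p,q\}$, contrary to your last paragraph. This is precisely why the paper's claim carries the extra hypothesis $f(r)\neq p,q$, and why the hard part of the paper's proof is what comes \emph{after} the claim: redefining $z_p$ (and $z_q$) to be $f(z)_p$ for any $f$ carrying some $r\neq p,q$ to $p$, proving this is independent of $f$ by applying the claim to $f^{-1}f'$, and then deducing biregularity of all elements of $G$ over $p$ and $q$ by composition. None of this appears in your proposal.

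Your substitute for that missing part also fails. The preimage $G'$ in $G$ of $\mathrm{Stab}(p)\cap\mathrm{Stab}(q)$ has index equal to the size of the $\hbar(G)$-orbit of the pair $(p,q)$; if some $b\in\hbar(G)$ moves $p$ off $\{p,q\}$, then the points $a^nb(p)$ are pairwise distinct, this orbit is infinite, and $G'$ has infinite index --- so it need not be finitely generated and Lemma~\ref{lem:abel} cannot be applied to it. (When $\hbar(G)$ does stabilise $\{p,q\}$, the whole lemma follows from Lemma~\ref{lem:abel} at once; that is the easy case.) Moreover, when the $\hbar(G)$-orbit of $p$ contains points $r\notin\{p,q\}$, your two prescriptions $y_r=z_r$ and $y_{f(p)}=f(y_p)$ assign the coordinate at such $r$ twice, with no consistency guarantee. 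Finally, even the corrected persistent-fibre argument needs $f(p)\neq q$ in order to keep the $(t^nf)$-orbit of $r$ from bouncing between the neighbourhoods of $p$ and $q$: elements whose $\hbar$-image interchanges $p$ and $q$ require the paper's separate argument via Lemma~\ref{lem:first_item} applied to the virtually abelian group $\langle a,\hbar(f)\rangle$. That is the one place where the hypothesis that $\mathrm{Stab}(p)\cap\mathrm{Stab}(q)$ is abelian is actually used; your proposal invokes this hypothesis only in a step where it does not help.
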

\begin{proof}
Choose $t\in G$ with $\hbar(t)=a$. Let $z\in \Jon$ be a fixed-point of $t$. We claim that for any $f\in G$ and any $r\in P\setminus \{p,q\}$ with $f(r)\neq p,q$, we have that $f$ is biregular w.r.t. $z$ over $r$. 

To justify the claim, first consider the case where $\hbar(f)$ interchanges $p$ and $q$. Then the group $\langle a, \hbar(f)\rangle$ is virtually abelian, {since it has an index $2$ subgroup contained in $\mathrm{Stab}(p)\cap\mathrm{Stab}(q)$}. By Lemma~\ref{lem:abel}, the group $\langle t,f\rangle$ has a finite orbit, hence a fixed-point $y$ in $\Jon$ by Lemma~\ref{lem:first_item}. {Since both $y$ and $z$ belong to $\Jon$}, all but finitely many coordinates of $y$ have to be that of $z$. {Thus} $t(y)=y$ implies $y_r=z_r$ for all $r$ in all infinite orbits of~$\langle t\rangle$, so for all $r\neq p,q$. Since $f$ fixes $y$, we have that $f$ is biregular w.r.t.\ $z$ over $r$, as desired.

Second, consider the case where $\hbar(f)$ does not interchange $p$ and $q$. Then after possibly replacing $t$ with $t^{-1}$ and interchanging $p$ with $q$, we can assume $f(p)\neq q$. Let $B\subset P$ be the finite set of points over which $f$ or $f^{-1}$ is singular w.r.t.\ $z$. Let $U\ni p$ (respectively, $V\ni q$) be an open neighbourhood intersecting $B\cup \{f^{-1}(p),f(q)\}$ only possibly at $p$ (respectively, $q$), and such that $f(U)$ is disjoint from $V$, which is possible since $f(p)\neq q$.
{Assume also that $U$ is disjoint from $V$.}
Let $n>0$ be as in Definition~\ref{def:NS}. 

If $f$ is singular over~$r$, then we have $r,f(r)\in P\setminus V$. Thus $t^nf(r)\subset U$, but $t^nf(r)\neq p$ since $f(r)\neq p$.
Since $f(U)$ is disjoint from $V$, and $U$ does not contain $f^{-1}(p)$, unless $f(p)=p$, we obtain inductively, for all $m>0$, that $(t^nf)^m(r)\in U\setminus p$. We also have $t^{-n}(r)\in V\setminus q$ and analogously we obtain $t^{-n}(t^nf)^m(r)\in V\setminus q$ for all $m<0$. Consequently, $t^nf$ has persistent fibre over $r$ (with $l=1$), which contradicts Remark~\ref{rem:per} and finishes the proof of the claim.

Note that if the entire $\hbar(G)$ stabilises $\{p,q\}$, then we are done by Lemma~\ref{lem:abel}. Suppose also for the moment that $\hbar(G)$ does not {fix} $p$ or $q$. Then there is $f\in G$ and $r\neq p,q$ with $f(r)=p$. Consider another $f'\in G$ with $r'\neq p,q$ and $f'(r')=p$. Then $f^{-1}f'(r')=r.$ Applying the claim above to $f^{-1}f'$, we have $f^{-1}f'(z)_{r'}= z_{r}$. Consequently, $f'(z)_{p}= f(z)_{p}$. We now replace the coordinate $z_p$ of $z$ by $f(z)_{p}$, which, as we have seen, does not depend on~$f$. Note that the new $z$ is still fixed by $t$, which can be verified by substituting above $f'=tf$. Furthermore, now $f$ is biregular over $r$ and $f^{-1}$ is biregular over $p$.
We analogously change the $z_q$ coordinate of~$z$. 

We will verify that the new $z$ is a fixed-point for $G$. It remains to verify the biregularity of $f \in G$ over $u\in \{p,q\}$ in the case where $f(u)\in \{p,q\}$. Choose any $r\neq p,q$ and $f'\in G$ with $f'(r)=u$. Introduce $f''=ff'$, which satisfies $f''(r)=f(u)$. From the previous paragraph it follows that both $f',f''$ are biregular over $r$. This implies that $f$ is biregular over $u$, as desired.

In the case where $\hbar(G)$ {fixes}, say, $p$, we redefine $z_p$ to be the fixed-point of the projection of $G$ to $G_p$.
\end{proof}

\subsection{Conclusion}

Recall that an element $h\in\PGL_2(k)$ is \emph{unipotent}, if it is conjugate to an element of the form $\left(\begin{array}{cc}
	1 & c \\
	0 & 1
\end{array} \right)$ for some $c\in k$. By considering the Jordan decomposition, we observe that, for $k$ algebraically closed, every element of $\PGL_2(k)$ is either unipotent or semisimple.

\begin{proof}[Proof of Theorem~\ref{thm:maing}]
Suppose first that $\hbar(G)$ contains a semisimple element $a$ of infinite order with eigenvalues $\lambda, \lambda^{-1}$ and fixed-points $p,q\in \p^1$. Since $a$ is of infinite order, $\lambda$ is not a root of unity.  Let $z\in \Jon$. Let $\tilde{k}\subset k$ be the smallest field such that the points $p$ and $q$, the scalar $\lambda$, the elements of $\hbar(G)$, and all the points over which the elements of $G$ are singular w.r.t.\ $z$, are defined over $\tilde{k}$. Since $G$ is finitely generated, the field $\tilde{k}$ is a finitely generated field extension over the prime field of~$k$.

Let $\Jon_{\tilde{k}}=\bigoplus_{p\in \p^1(\tilde{k})} (X_p,x_p)$, where $\p^1(\tilde{k})$ is the set of the $\tilde k$-rational points of~$\p^1$. Note that the action of $G$ on $\Jon$ projects to an action on $\Jon_{\tilde{k}}$. 

By \cite[Theorem 2.65]{drutukapovich}, we can embed $\tilde{k}$ as a subfield into some local field $K$ with norm $|\cdot|$ such that $|\lambda|\neq 1$. Then $a$ has north-south dynamics on $\p^1(\tilde{k})$ with respect to the Hausdorff topology on $\p^1(\tilde{k})$ induced by the topology of $\p^1(K)$. By Lemma~\ref{lem:semisimple} applied with $P=\p^1(\tilde{k})$, we have that $G$ fixes a point $\{y_p\}_{p\in\p^1(\tilde{k})}\in\Jon_{\tilde{k}}$. Since all the elements of $G$ are biregular w.r.t.\ $z$ over all the points outside $\p^1(\tilde{k})$, we obtain that $G$ fixes the point $\{y_p\}_{p\in \p^1}\in\Jon$, where $y_p=z_p$  for $p\notin\p^1(\tilde{k})$.

Otherwise, if $\hbar(G)$ does not contain a semisimple element of infinite order, then all the elements of $\hbar(G)$ are unipotent or of finite order. By \cite[Proposition~14.46]{drutukapovich}, there is a finite index subgroup $G'<G$ with $\hbar(G')$ conjugate into the abelian subgroup of the elements of the form $\left(\begin{array}{cc}
	1 & c \\
	0 & 1
\end{array} \right).$
Lemma~\ref{lem:abel} implies that also in this case $G'$ (and hence $G$) fixes a point of $\Jon$.
\end{proof}

{\section{Proofs of Theorem~\ref{thm:main_general} and Corollary~\ref{cor:degree}}}

\subsection{Non-rational surfaces}\label{sec:remarks} 
Here, we give a proof of Theorem~\ref{thm:main_general}. 

\begin{proof}[Proof of Theorem \ref{thm:main_general}]
{Again, by Remark~\ref{rmk_alg_closed_fields}, we can assume that $k$ is algebraically closed}.
	If $S$ is rational, then the result follows from Theorem~\ref{thm:main}. 
	
	If the Kodaira dimension of $S$ is non-negative, then there exists a smooth projective surface $T$ birationally equivalent to $S$ such that $\Bir(T)=\Aut(T)$ and the result follows from Lemma~\ref{lem:aut}.
	
	Finally, if the Kodaira dimension of $S$ is $-\infty$, but $S$ is not rational, then $S$ is birationally equivalent to $\p^1\times C$ for some non-rational smooth curve $C$. In this case, all the elements  $f\in\Bir(S)$ preserve the rational fibration given by the projection to~$C$.  {Indeed, if $F$ is a general fibre of the second projection $\pi_2\colon S\to C$, then the restriction of $\pi_2\circ f$ to $F$ induces a rational map to $C$. Since $F\cong \p^1$, this rational map cannot be dominant, hence its image is a point, which implies that $f(F)$ is another fibre.} If the genus of $C$ is $>1$, then $\Bir(C)=\Aut(C)$ is finite. If the genus of~$C$ is~$1$, then $C$ is an elliptic curve and $\Bir(C)=\Aut(C)$ is virtually abelian. We can thus apply Lemma~\ref{lem:abel}.
\end{proof}

\subsection{Degree growth of finitely generated groups}\label{sec:degrees}

There is a well-known and important correspondence between the dynamical behaviour of birational transformations in $\Cr_2(k)$ and the type of isometry they induce on the infinite dimensional hyperbolic space $\h$. The following theorem gives in particular a precise description of the degree growth. It is due to several people. We refer to \cite{cantat2015cremona} for details and references. 

\begin{theorem}[Gizatullin; Diller and Favre; Cantat]\label{thm:hyperboloid}
	Let $\kk$ be an algebraically closed field and $f\in\Cr_2(k)$. Then one of the following is true:
	\begin{enumerate}
		\item The transformation $f$ is algebraic, the isometry of $\h$ induced by $f$ is elliptic, and the degree sequence $\{\deg(f^n)\}$ is bounded.	
		\item[(2a)] The isometry of $\h$ induced by $f$ is parabolic, $\deg(f^n)\sim cn$ for some $c>0$, and $f$ preserves a rational fibration. 
		\item[(2b)] The isometry of $\h$ induced by $f$ is parabolic, $\deg(f^n)\sim cn^2$ for some $n$, and $f$ preserves a fibration of genus 1 curves. 
		\item[(3)] The isometry of $\h$ induced by $f$ is loxodromic, $\deg(f^n)=c\lambda^n +\mathcal{O}(1)$ for some $c>0$ and $\lambda>1$. 
	\end{enumerate}
\end{theorem}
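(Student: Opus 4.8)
The plan is to read off all four cases from the isometric action of $\Cr_2(k)$ on the infinite-dimensional hyperbolic space $\h$ (the Picard--Manin space), using the standard dictionary between the dynamical type of an isometry and the degree sequence. Fix the class $e_0\in\h$ of a line, normalized by $e_0\cdot e_0=1$. Since the action preserves the intersection form, every total transform satisfies $\big((f^n)_*e_0\big)^2=1$, and the classical formula gives
\[
\deg(f^n)=(f^n)_*e_0\cdot e_0=\cosh\dist\big(e_0,(f^n)_*e_0\big).
\]
Hence the growth of $\deg(f^n)$ is governed entirely by the orbit $\{(f^n)_*e_0\}$, and the trichotomy elliptic/parabolic/loxodromic for isometries of a hyperbolic space produces the regimes $(1)$, $(2)$, $(3)$. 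The whole theorem thus reduces to understanding this single orbit together with the geometry of the associated fixed classes.

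At the two extremes the argument is short. If the induced isometry is loxodromic, it translates by some $\ell>0$ along an axis, so $\dist(e_0,(f^n)_*e_0)=n\ell+\mathcal{O}(1)$ and $\deg(f^n)=\cosh\big(n\ell+\mathcal{O}(1)\big)$ grows exponentially; to obtain the sharp form $c\lambda^n+\mathcal{O}(1)$ I would use that $f_*$ fixes the two endpoints $\theta^{\pm}$ of the axis, null eigenvectors for $\lambda=e^{\ell}$ and $\lambda^{-1}$, and decompose $e_0$ along $\theta^+,\theta^-$ and an orthogonal complement on which the remaining spectrum lies on the unit circle. The leading coefficient is positive because $e_0$ is ample, and $\lambda$ is the spectral radius, i.e.\ the dynamical degree. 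If the isometry is elliptic, the orbit is bounded, so $\deg(f^n)$ is bounded; conversely a bounded degree sequence means $f$ is algebraic, hence (by the regularization results recalled earlier) conjugate to an automorphism of a projective surface $S$, and then the image of $f$ in $\GL(\NS(S))$ has finite order exactly as in the proof of Lemma~\ref{lem:aut}, so the induced isometry is elliptic.

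The substance of the theorem is the parabolic case together with the split between linear and quadratic growth. Here the orbit escapes to a unique boundary point, so $\dist(e_0,(f^n)_*e_0)\to\infty$ sublinearly, and the $f_*$-invariant boundary class $\theta$ is nef with $\theta^2=0$. Following Gizatullin and Diller--Favre, a suitable multiple of $\theta$ is the class of a fibration $\pi\colon S\to C$ preserved by $f$, and the arithmetic genus of the general fibre $F$ is detected by adjunction through $\theta\cdot K_S$: since $F^2=0$ one has $F\cdot K_S=2g(F)-2$, so the fibre is rational when $\theta\cdot K_S=-2$ and of genus $1$ when $\theta\cdot K_S=0$. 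One then shows these are the only cases occurring for a parabolic isometry, yielding the rational fibration of case $(2a)$ and the genus-$1$ fibration of case $(2b)$.

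What remains, and is the step I expect to be the main obstacle, is pinning down the precise rates $\deg(f^n)\sim cn$ and $\deg(f^n)\sim cn^2$ rather than merely $\lambda(f)=1$. In the genus-$1$ case one passes to a Halphen-type automorphism of a rational surface, where $f_*$ is a genuine isometry of the finite-dimensional lattice $\NS(S)$; Lorentzian signature forces its unipotent part to be a single Jordan block of size $3$ (size $2$ being excluded, as even Jordan blocks must occur in pairs and would require two positive directions), and the top of this block pairs nontrivially with the ample class $e_0$, giving $\deg(f^n)\sim cn^2$ with $c>0$. In the rational-fibration case $f$ is a Jonqui\`eres twist that is \emph{not} conjugate to any surface automorphism, so $f_*$ is no longer an isometry of a finite-dimensional $\NS$, the size-$3$ obstruction disappears, and the linear rate $\deg(f^n)\sim cn$ comes instead from the bounded degree increment of the elementary links along the preserved ruling (Proposition~\ref{prop:elementary}). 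Reconciling the infinite-dimensional parabolic behaviour with these two finite-dimensional pictures — in particular explaining why the rational-fibration case yields exactly $n$ and not $n^2$ — is the crux of the argument.
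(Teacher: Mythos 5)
The first thing to note is that the paper does not prove Theorem~\ref{thm:hyperboloid} at all: it is quoted as a classical theorem of Gizatullin, Diller--Favre and Cantat, with \cite{cantat2015cremona} cited for details and references. So your proposal can only be measured against the literature, not against an in-paper argument. Your starting point is the standard and correct one: $\deg(f^n)=(f^n)_*e_0\cdot e_0=\cosh\dist\bigl(e_0,(f^n)_*e_0\bigr)$, so everything reduces to the dynamical type of the isometry. The two extreme cases are essentially complete as you sketch them: elliptic $\Leftrightarrow$ bounded orbit $\Leftrightarrow$ bounded degrees (circumcenter argument one way, regularization plus the finite-order argument of Lemma~\ref{lem:aut} the other), and in the loxodromic case the decomposition of $e_0$ along the two null eigenvectors of the axis plus a vector in the negative definite orthogonal complement, with Cauchy--Schwarz bounding the third term by $\mathcal{O}(1)$, does give $\deg(f^n)=c\lambda^n+\mathcal{O}(1)$ with $c>0$.

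The genuine gaps are both in the parabolic case, and they are precisely the content that makes this a theorem of Gizatullin and Diller--Favre rather than a formal consequence of hyperbolic geometry. First, the existence of the invariant fibration is cited, not proved: the fixed nef isotropic class $\theta$ lives a priori in the \emph{infinite-dimensional} Picard--Manin space, and before adjunction ($F\cdot K_S=2g-2$) can even be formulated one must show that $\theta$ is a Cartier class, i.e.\ pulled back from the N\'eron--Severi group of some finite blow-up $S$, and then that a multiple of it moves in a pencil. This is where Diller--Favre's algebraic stability theorem and Gizatullin's Riemann--Roch argument enter, and where the exclusion of genus $\geq 2$ fibres happens; your sketch applies intersection theory on a surface to a class you have not yet placed on any surface. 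Second, the rates. Your Jordan-block argument for quadratic growth is sound (it is exactly the mechanism behind the paper's own Lemma~\ref{lem:halphen}, proved there for groups), but for linear growth your proposed mechanism only gives an upper bound: subadditivity of base-points, $\#\bp(gh)\leq\#\bp(g)+\#\bp(h)$ together with $\deg(g)=\frac{\#\bp(g)+1}{2}$ (this is how the paper itself gets $D_T(n)\leq Kn$ in Corollary~\ref{cor:degree}), yields via Fekete's lemma that $\deg(f^n)/n$ converges to some $c\geq 0$, but nothing you say excludes $c=0$, i.e.\ unbounded sublinear growth. The missing ingredient is Diller--Favre's \emph{persistent base-point} count --- a Jonqui\`eres twist has a base-point that is a base-point of $f^n$ but not of $f^{-n}$ for all large $n$, and the orbit of such a point forces $\#\bp(f^n)\geq cn$; it is no accident that the paper builds its whole fixed-point machinery around the analogous notion of persistent fibres (Definition~\ref{def:pers}). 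Since you flag exactly this step as unresolved, the proposal stands as an accurate road map of the known proof, not as a proof.
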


We prove now Corollary~\ref{cor:degree}:

\begin{proof}[Proof of Corollary~\ref{cor:degree}]
	If all elements in $G$ induce elliptic isometries on $\h$, i.e., they are algebraic, then $G$ and hence  $D_T(n)$ are bounded by Theorem~\ref{thm:main}. This implies that the orbit of $G$ on $\h$ is bounded and hence that $G$ fixes a point in~$\h$.
	
	Next, consider the case, where no element in $G$ induces a loxodromic isometry of $\h$, but there is an element $f\in G$ inducing a parabolic isometry. 
	
	First, assume that $f$ preserves a rational fibration. Then all elements in $G$ preserve this same rational fibration (see for instance \cite[Lemma~5.3.4]{urech2017subgroups}) and after conjugation we may assume that $G$ is a subgroup of the Jonqui\`eres group (note that the asymptotic growth of $D_T(n)$ is invariant under conjugation). For an element $g$ in the Jonqui\`eres group we have $\deg(g)=\frac{\#\bp(g)+1}{2}$ (see for instance \cite{Lamy_Cremona}), where $\#\bp(g)$ denotes the number of base-points of $g$. Since $\#\bp(gh)\leq \#\bp(g)+\#\bp(h)$ we obtain that that $D_T(n)\leq Kn$, where $K=\max_{g\in T}\{\#\bp(g)\}$. At the same time, since by assumption $G$ contains an element whose degree growth is linear, we have $kn\leq D_T(n)$, for some $k>0$. Hence, $D_T(n)\asymp n$.
	
	Now, assume that $f$ preserves a fibration of curves of genus $1$. Again, this implies that all of $G$ preserves the same fibration of curves of genus $1$ and after conjugation we may assume that $G$ is a subgroup of $\Aut(S)$, where $S$ is a Halphen surface \cite[Lemma~5.3.4]{urech2017subgroups}. In this case, the statement follows from Lemma~\ref{lem:halphen} below.
	
	Finally, we consider the case, where $G$ contains an element $f$ inducing a loxodromic isometry on $\h$. By Theorem~\ref{thm:hyperboloid}, there exist $c$ and $\lambda$ such that $\deg(f^n)=c\lambda^n+\mathcal{O}(1)$. On the other hand, for $\lambda_2=\max_{g\in T}\{\deg(g)\}$ we have $D_T(n)\leq \lambda_2^n$. This shows that $f\asymp \lambda^n$.
\end{proof}

A \emph{Halphen surface} is a rational smooth projective surface $S$ such that  $|−mK_S|$ is a pencil of genus 1 curves with empty base locus for some $m>0$. The ideas used in the following lemma have been described in \cite{gizatullin1980rational} (see also \cite{grivaux2013parabolic} and \cite{cantat2012rational}). We follow the account described in \cite{Lamy_Cremona}.

\begin{lemma}\label{lem:halphen}
	Let $S$ be a Halphen surface and let $G<\Aut(S)$ be a finitely generated subgroup containing a non-algebraic element $f$. Then $D_T(n)\asymp cn^2$ for some $c>0$. 
\end{lemma}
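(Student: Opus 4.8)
The plan is to understand the degree growth of a finitely generated group $G<\Aut(S)$ acting on a Halphen surface $S$ via the linear action on the N\'eron--Severi lattice $N^1(S)$, and then transfer this information back to the degree function $D_T$. The key structural fact about a Halphen surface is that $-mK_S$ is the class of the pencil of genus $1$ fibres, so $K_S$ is fixed (up to the rational multiple) by the whole automorphism group $\Aut(S)$, and $K_S^2=0$. Hence the $\Aut(S)$-action on $N^1(S)$ preserves the isotropic class $[K_S]$, and the geometry is that of a lattice with a distinguished null vector. Concretely I would use the homomorphism $\Aut(S)\to\GL(N^1(S))$; since $f$ is non-algebraic it induces a parabolic isometry of $\h$, whose image $f_*$ in $\GL(N^1(S))$ is a quasi-unipotent transformation fixing $[K_S]$ and having a single nontrivial Jordan block. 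The growth $\deg_H(f^n)=f^*H\cdot H$ is then governed by this Jordan block and grows like $cn^2$, matching case (2b) of Theorem~\ref{thm:hyperboloid}.

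First I would set up the degree comparison: fix the ample class $H$ and recall $\deg_H(g)=g^*H\cdot H$, which by the invariance statement in the preliminaries is comparable to the ordinary degree $\deg(g)$ up to multiplicative constants, so it suffices to bound $D_T^H(n):=\max_{g\in B_T(n)}\deg_H(g)$ from above by $Cn^2$ and from below by $c'n^2$. For the lower bound I would invoke Theorem~\ref{thm:hyperboloid}(2b) directly on the single element $f\in G$: since $f$ is non-algebraic (parabolic) we get $\deg(f^n)\sim cn^2$, and as $f^n\in B_T(Ln)$ for $L=l_T(f)$, this already gives $D_T(n)\succeq n^2$. The real content is therefore the \emph{upper} bound $D_T(n)\preceq n^2$ over all words, not just powers of a single element.

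For the upper bound I would work with the quadratic form $q(v)=v\cdot v$ and the linear form $v\mapsto v\cdot K_S$ on $N^1(S)$. The plan is to show that for every $g\in G$ the matrix entries of $g_*$ (in a fixed basis) grow at most quadratically in the word length $l_T(g)$. This follows if the image $\Gamma=\rho(G)<\GL(N^1(S))$ consists of matrices with at most quadratic growth, which I would establish by exploiting that $\Gamma$ preserves the isotropic vector $[K_S]$ and the integral lattice $N^1(S)$: the stabiliser of an isotropic line in an integral orthogonal group is virtually a unipotent-by-(finite) extension where the unipotent part has a two-step filtration, forcing at most quadratic (degree-$2$ Jordan) growth of matrix entries. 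Concretely, one uses that $\Gamma$ fixes $[K_S]$, acts on the rank-one quotient $K_S^\perp/\langle K_S\rangle$ through a group of isometries of a negative (semi)definite lattice, hence with finite image, and the induced unipotent radical has entries growing polynomially of degree at most two in the word length. Combining: $\|g_*\|\le C\, l_T(g)^2$, whence $\deg_H(g)=g^*H\cdot H\le C' l_T(g)^2$ and $D_T(n)\preceq n^2$.

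The main obstacle I anticipate is the upper bound on matrix growth, specifically controlling the unipotent part of $\Gamma$ uniformly over \emph{all} group elements rather than over powers of a fixed element --- one must rule out that a clever product of generators produces cubic or faster growth. This is exactly where finite generation and the two-step unipotent structure of the stabiliser of an isotropic line must be used together, perhaps passing to a finite-index subgroup $G'<G$ (as is done in the proof of Theorem~\ref{thm:maing}) on which $\rho$ lands in a genuinely unipotent group with a degree-$2$ filtration; the comparison $B_{T'}(n)\subseteq B_T(dn)$ then lets one return to $G$ without changing the $\asymp$-class. Once the linear-algebraic quadratic bound is in place, translating between $\deg_H$ and $\deg$ and between word length in $T$ and in $T'$ is routine, and the two inequalities give $D_T(n)\asymp n^2$.
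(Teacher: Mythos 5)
Your proposal is correct and its core mechanism is the same as the paper's, but the two arguments are organized quite differently. The paper first passes to a finite-index subgroup on which $G$ is \emph{abelian} and all nontrivial elements are parabolic (quoting the known structure of $\Aut(S)$ for Halphen surfaces), fixes the nef isotropic class $D_0$ proportional to $K_S$, passes to a further finite-index subgroup acting trivially on $D_0^\perp/D_0$ (via finiteness of the orthogonal group of a negative definite integral lattice, exactly as in your sketch), and then computes $(f_1)^{n_1}_*\cdots (f_k)^{n_k}_*A$ explicitly for \emph{commuting} generators, obtaining a quadratic polynomial in the exponents whose leading coefficients $t_{ii}$ are positive by Theorem~\ref{thm:hyperboloid}(2b); this single formula yields both bounds. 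You instead take the lower bound directly from $\deg(f^n)\sim cn^2$ and prove the upper bound for \emph{arbitrary} words by bounding matrix entries, which has the advantage of not needing virtual abelianness of $\Aut(S)$ as an external input. The ``main obstacle'' you flag --- that a clever word might beat quadratic growth --- is resolved by making your filtration explicit: every $g$ in the finite-index subgroup fixes $D_0$ and acts trivially on $D_0^\perp/D_0$, so $N_g:=g_*-\id$ maps $N^1(S)\otimes\R$ into $D_0^\perp$, maps $D_0^\perp$ into $\R D_0$, and kills $D_0$; hence any product of three such operators vanishes, and a product of $n$ generators expands as $\id+\sum_j N_j+\sum_{j<j'}N_jN_{j'}$, a sum of $O(n^2)$ uniformly bounded terms, giving $\deg_H(g)\leq C\, l_T(g)^2$. (Incidentally, this same computation shows the relevant subgroup is automatically abelian, recovering the paper's setting without citation.) Two points should be tightened: your claim that non-algebraic implies parabolic requires ruling out loxodromic elements, which follows because $f_*$ fixes the nonzero isotropic class $[K_S]$, while a loxodromic isometry can only fix such a class if it lies in the orthogonal complement of its two boundary eigendirections, which is negative definite --- a contradiction (the paper handles this by citation); similarly, invoking case (2b) rather than (2a) of Theorem~\ref{thm:hyperboloid} uses that a parabolic automorphism preserving the genus-$1$ pencil cannot have linear growth. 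Also, $K_S^\perp/\langle K_S\rangle$ is a negative definite lattice of rank $\rho(S)-2$, not a ``rank-one quotient,'' though this slip does not affect the argument.
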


\begin{proof}
	Let us first recall that after possibly passing to a finite index subgroup  (which does not change the asymptotic growth of $D_T(n)$), we may assume that $G$ is abelian. Moreover, all algebraic alements in $\Aut(S)$ are of finite order and $\Aut(S)$ does not contain any element inducing a loxodromic isometry on $\h$  (see for instance \cite{cantat2015cremona} for these facts). Hence, again up to passing to a finite index subgroup, we may assume that all elements in $G$ induce a parabolic isometry on~$\h$.

	Let $N_{\mathbb{R}}(S)$ be the N\'eron--Severi space of $S$. Recall that $N_{\mathbb{R}}(S)$ comes with an intersection form of signature $(1,\dim(N_{\mathbb{R}}(S))-1)$, which is preserved by the action of $\Aut(S)$ by push-forwards. There exists a nef divisor class $D_0\in N_{\mathbb{R}}(S)$ such that $D_0\cdot D_0=0$ and such that $g_*D_0=D_0$ for all $g\in G$ (in fact, we can take $D_0=mK_S$). The assumption that all elements in $G$ induce parabolic isometries on $\h$ implies that for all $f\in G$, the only eigenvectors of $f$ are multiples of $D_0$.  For all $g\in G$, the restriction of $g_*$ to $D_0^\perp/D_0$ has finite order, since $g_*$ preserves an integral lattice and the induced intersection form on $D_0^\perp/D_0$ is negative definite. Hence, up to passing to a finite index subgroup of $G$, we may assume that the restriction of $G$ to $D_0^\perp/D_0$ is the identity. Let $f_1,\dots, f_k$ be generators of $G$.
	
	Let $A\in N_{\mathbb{R}}(S)$ be an ample divisor. Note that we have $f_*A\neq A$ for all $f\in G$ and $A\notin D_0^\perp$. Write $(f_i)_*A=A+R_i$, where $R_i\in D_0^\perp$. Since the restriction of $(f_i)_*$ to $D_0^\perp/D_0$ is the identity, we can write $(f_i)_*R_j=R_j+t_{ij}D_0$ for some $t_{ij}\in\R$. Let us observe that $(f_i)_*(f_j)_*A=(f_j)_*(f_i)_*A$ for all $i$ and $j$ implies that $t_{ij}=t_{ji}$ for all $i$ and $j$. 
	
	By induction, we obtain
	\[
	(f_i)^n_*A=A+nR_i+\frac{n(n-1)t_{ii}}{2}D_0,
	\]
	and, as a consequence,
	\[
	(f_1)^{n_1}_*\cdots (f_k)^{n_k}_*A=A+\sum_in_iR_i+\left(\sum_i\frac{n_i(n_i-1)}{2}t_{ii}+\sum_{i<j}n_in_jt_{ij}\right)D_0.
	\]

Since, by assumption, all the $f_i$ are non-algebraic and preserving a fibration of genus 1 curves, the sequence $\deg(f^n)$ grows quadratically in $n$, by Theorem~\ref{thm:hyperboloid}, and hence the $t_{ii}$ are positive. We obtain that 
	
	\[
	\deg_A(f_1^{n_1}\cdots f_k^{n_k})=((f_1)^{n_1}_*\cdots (f_k)^{n_k}_*A)\cdot A.
	\]
	Since $A$ is ample, we have that $A\cdot D_0>0$. Hence, $D_T(n)$ has quadratic growth.
\end{proof}

\section*{Acknowledgments}
We thank Serge Cantat for his valuable comments that helped to improve the exposition of this article.  {We also thank the anonymous referees for their helpful suggestions.}
The first author would like to thank the CRM (Centre de Recherche Mathématiques de Montreal), the Simons foundation and the organisers of the thematic semester ``Théorie géométrique des groupes'' for her stay at the CRM where a part of this project was realized.

\bibliographystyle{amsalpha}
\bibliography{bibliography_cu}

\providecommand{\bysame}{\leavevmode\hbox to3em{\hrulefill}\thinspace}
\providecommand{\MR}{\relax\ifhmode\unskip\space\fi MR }
\providecommand{\MRhref}[2]{%
  \href{http://www.ams.org/mathscinet-getitem?mr=#1}{#2}
}
\providecommand{\href}[2]{#2}
\begin{thebibliography}{NOP22}

\bibitem[BC16]{blanc2016dynamical}
J{\'e}r{\'e}my Blanc and Serge Cantat, \emph{Dynamical degrees of birational
  transformations of projective surfaces}, Journal of the American Mathematical
  Society \textbf{29} (2016), no.~2, 415--471.

\bibitem[BF13]{Blanc_Furter}
J\'{e}r\'{e}my Blanc and Jean-Philippe Furter, \emph{Topologies and structures
  of the {C}remona groups}, Ann. of Math. (2) \textbf{178} (2013), no.~3,
  1173--1198. \MR{3092478}

\bibitem[BF21]{breuillard2021joint}
Emmanuel Breuillard and Koji Fujiwara, \emph{On the joint spectral radius for
  isometries of non-positively curved spaces and uniform growth}, no.~1,
  317--391.

\bibitem[Can11]{Cantat_groupes_birat}
Serge Cantat, \emph{Sur les groupes de transformations birationnelles des
  surfaces}, Ann. of Math. (2) \textbf{174} (2011), no.~1, 299--340.

\bibitem[Can15]{cantat2015cremona}
\bysame, \emph{The {C}remona group}, Algebraic geometry: Salt Lake City
  \textbf{97} (2015), 101--142.

\bibitem[CD12]{cantat2012rational}
Serge Cantat and Igor Dolgachev, \emph{Rational surfaces with a large group of
  automorphisms}, Journal of the American Mathematical Society \textbf{25}
  (2012), no.~3, 863--905.

\bibitem[Dan20]{MR4133708}
Nguyen-Bac Dang, \emph{Degrees of iterates of rational maps on normal
  projective varieties}, Proc. Lond. Math. Soc. (3) \textbf{121} (2020), no.~5,
  1268--1310. \MR{4133708}

\bibitem[DK18]{drutukapovich}
Cornelia Dru\c{t}u and Michael Kapovich, \emph{Geometric group theory},
  American Mathematical Society Colloquium Publications, vol.~63, American
  Mathematical Society, Providence, RI, 2018, With an appendix by Bogdan Nica.

\bibitem[Fav10]{favrebourbaki}
Charles Favre, \emph{Le groupe de {C}remona et ses sous-groupes de type fini},
  no. 332, 2010, S\'{e}minaire Bourbaki. Volume 2008/2009. Expos\'{e}s
  997--1011, pp.~Exp. No. 998, vii, 11--43.

\bibitem[Giz80]{gizatullin1980rational}
Marat~Kharisovitch Gizatullin, \emph{Rational g-surfaces}, Izvestiya Rossiiskoi
  Akademii Nauk. Seriya Matematicheskaya \textbf{44} (1980), no.~1, 110--144.

\bibitem[GLU23]{GLU_Neretin}
Anthony Genevois, Anne Lonjou, and Christian Urech, \emph{{C}remona groups over
  finite fields, {N}eretin groups, and non-positively curved cube complexes},
  International Mathematics Research Notices (2023).

\bibitem[Gri16]{grivaux2013parabolic}
Julien Grivaux, \emph{Parabolic automorphisms of projective surfaces (after
  {M}. {H}. {G}izatullin)}, Mosc. Math. J. \textbf{16} (2016), no.~2, 275--298.

\bibitem[HO21]{HO}
Thomas Haettel and Damian Osajda, \emph{Locally elliptic actions, torsion
  groups, and nonpositively curved spaces}, arXiv preprint arXiv:2110.12431
  (2021).

\bibitem[Jun09]{jungers2009joint}
Rapha{\"e}l Jungers, \emph{The joint spectral radius: theory and applications},
  vol. 385, Springer Science \& Business Media, 2009.

\bibitem[Lam21]{Lamy_Cremona}
St{\'e}phane Lamy, \emph{The {C}remona group}, in preparation, available at
  https://www.math.univ-toulouse.fr/~slamy/blog/cremona.html (2021).

\bibitem[LU21]{lonjou2021actions}
Anne Lonjou and Christian Urech, \emph{Actions of {C}remona groups on
  $\mathrm{CAT}(0)$ cube complexes}, Duke Mathematical Journal \textbf{170}
  (2021), no.~17, 3703--3743.

\bibitem[NOP22]{NOP}
Sergey Norin, Damian Osajda, and Piotr Przytycki, \emph{Torsion groups do not
  act on 2-dimensional $\mathrm{CAT}(0)$ complexes}, Duke Math. J. \textbf{171}
  (2022), no.~6, 1379--1415.

\bibitem[RS60]{rota1960note}
Gian-Carlo Rota and Gilbert Strang, \emph{A note on the joint spectral radius},
  Indag. Math \textbf{22} (1960), no.~4, 379--381.

\bibitem[Sch22]{schneider2022relations}
Julia Schneider, \emph{Relations in the {C}remona group over a perfect field},
  no.~1, 1--42.

\bibitem[Ser80]{Serre}
Jean-Pierre Serre, \emph{Trees}, Springer-Verlag, Berlin-New York, 1980,
  Translated from the French by John Stillwell.

\bibitem[Ser10]{Serre_Bourbaki}
\bysame, \emph{Le groupe de {C}remona et ses sous-groupes finis}, no. 332,
  2010, S\'{e}minaire Bourbaki. Volume 2008/2009. Expos\'{e}s 997--1011,
  pp.~Exp. No. 1000, vii, 75--100. \MR{2648675}

\bibitem[Ure17]{urech2017subgroups}
Christian Urech, \emph{Subgroups of cremona groups}, Ph.D. thesis,
  University\_of\_Basel, 2017.

\bibitem[Ure18]{MR3818624}
\bysame, \emph{Remarks on the degree growth of birational transformations},
  Math. Res. Lett. \textbf{25} (2018), no.~1, 291--308. \MR{3818624}

\end{thebibliography}

\end{document}